\theoremstyle{plain}
\newtheorem{theorem}{Theorem}[section]
\newtheorem{corollary}[theorem]{Corollary}
\newtheorem{lemma}[theorem]{Lemma}
\newtheorem{prop}[theorem]{Proposition}
\theoremstyle{definition}
\theoremstyle{remark}
\newcommand{\bbR}{\mathbb{R}}
\newcommand{\bbC}{\mathbb{C}}
\newcommand{\bbD}{\mathbb{D}}
\newcommand{\bbN}{\mathbb{N}}
\newcommand{\mco}{\mathcal{O}}
\newcommand{\eitheta}{e^{i\theta}}
\newcommand{\mutil}{\tilde{\mu}}
\newcommand{\dtpi}{\frac{d\theta}{2\pi}}
\newcommand{\nri}{n\rightarrow\infty}
\newcommand{\blri}{L\rightarrow\infty}
\newcommand{\kri}{k\rightarrow\infty}
\DeclareMathOperator*{\supp}{supp}
\DeclareMathOperator*{\Real}{Re}
\DeclareMathOperator*{\Imag}{Im}
\begin{document}
\title[ ] {Two Universality Results for Polynomial Reproducing Kernels}

\bibliographystyle{plain}

\thanks{  }


\maketitle




\begin{center}
\textbf{Brian Simanek}\\
\textit{\small{Baylor University Department of Mathematics\\
One Bear Place $\#$97328, Waco, TX 76798}}\\
\small{Brian$\_$Simanek@baylor.edu}
\end{center}


\begin{abstract}
We prove two new universality results for polynomial reproducing kernels of compactly supported measures.  The first applies to measures on the unit circle with a jump and a singularity in the weight at $1$ and the second applies to area-type measures on a certain disconnected polynomial lemniscate.  In both cases, we apply methods developed by Lubinsky to obtain our results.
\end{abstract}

\vspace{5mm}

\noindent Keywords:  Orthogonal polynomials, universality, reproducing kernels, Christoffel functions, confluent hypergeometric functions

\smallskip

\noindent AMS Subject Classifications: Primary: 42C05, Secondary: 33C15, 46E22

\section{Introduction}\label{intro}

\subsection{Background and Results}  Given a finite, positive, and compactly supported measure $\mu$ with infinitely many points in its support, let $\{\varphi_n(z)\}_{n=0}^{\infty}$ be the corresponding sequence of orthonormal polynomials satisfying
\[
\int\varphi_n(z)\overline{\varphi_m(z)}d\mu(z)=\delta_{m,n}.
\]
The leading coefficient of $\varphi_n$ is $\kappa_n$ and $\varphi_n/\kappa_n$ is a monic polynomial, which we denote by $\Phi_n$.  If ever it is necessary to specify the measure of orthogonality, we will write $\varphi_n(z;\mu)$, $\Phi_n(z;\mu)$, and $\kappa_n(\mu)$.  The degree $n$ polynomial reproducing kernel $K_n$ is given by
\[
K_n(z,w;\mu):=\sum_{m=0}^n\varphi_m(z)\overline{\varphi_m(w)}
\]
and is so named because if $Q(z)$ is a polynomial of degree at most $n$, then
\[
\int Q(z)K_n(w,z;\mu)d\mu(z)=Q(w).
\]

When one speaks of universality limits for such kernels, one is interested in determining existence of the limit
\begin{align}\label{klim}
\lim_{\nri}\frac{K_n(z+\epsilon_1(n),z+\epsilon_2(n);\mu)}{K_n(z,z;\mu)},
\end{align}
where $\epsilon_j(n)\rightarrow0$ as $\nri$ in a specific way for $j=1,2$.  The motivation for calculating such limits comes from random matrix theory and we refer the reader to \cite{Deift,LubMich} for further details.  The term ``universality" is used when one can establish existence of the limit (\ref{klim}) for a large class of measures $\mu$ and points $z\in\supp(\mu)$ in such a way that the limiting expression is independent of the measure $\mu$.  An example of such a universality limit is the sine kernel asymptotics that hold for a large class of well-behaved measures on the unit circle and real line (see for example \cite{Breuer,Findley,NewUniv,LubNg,SimExt,To09,To16}).  Many methods used to derive such asymptotics are described in \cite{Lub09}.  Another example is the Bessel kernel asymptotics that are often observed at the edge of the support of a well-behaved measure on the real line (see \cite{Danka2,KVL,LeviLubEdge,LubEdge,LubEdge2}).

A key aspect in all of the aforementioned results is the so-called \textit{regularity} of the measure in question.  We follow the terminology introduced in \cite{StaTo} and say that a measure is regular if
\[
\lim_{\nri}\kappa_n^{1/n}=\frac{1}{\mbox{cap}(\supp(\mu))},
\]
where $\mbox{cap}(K)$ is the logarithmic capacity of the compact set $K$.  The consequences of regularity are complicated to state without much additional notation, so we will restrict our attention to discussing these consequences in the contexts of the specific supports on which we will focus.  For now, we mention that a measure $\mu$ on the unit circle is regular if and only if
\begin{equation}\label{regprop}
\lim_{\nri}\left(\sup_{\deg(P)\leq n}\left[\frac{\|P\|_{L^{\infty}(\partial\bbD)}}{\|P\|_{L^2(\mu)}}\right]^{1/n}\right)=1
\end{equation}
(see \cite[Theorem 3.2.3]{StaTo}).

Universality limits for less well-behaved measures are more difficult to establish.  The 2011 paper \cite{FMMFS} established a universality result for measures on $[-1,1]$ with a jump in the weight at $0$ and the limiting kernel is expressed in terms of the confluent hypergeometric function $_1F_1$ (see also \cite{IK}).  The recent paper \cite{Danka2} considers measures on the real line that have a certain singular behavior near an interior point in the support of the measure.  In 2012, Bourgade \cite[Theorem 3.2]{Bourgade} established a result on the unit circle when the measure displays a certain singularity at $1$ (see also \cite[Theorem 5]{BNR}).  Our first theorem is a generalization of Bourgade's result.  For the statement of the theorem and throughout this paper, we identify the unit circle with the interval $[0,2\pi]$.

\begin{theorem}\label{singjump}
Let $\mu$ be a regular measure on the unit circle given by $w(\theta)\dtpi+d\mu_s$, where $\mu_s$ is singular with respect to Lebesgue measure.  Suppose
\[
w(\theta)=g(\theta)\frac{4^{\gamma}|\Gamma(1+\gamma+i\tau)|^2}{\Gamma(2\gamma+1)}e^{(\pi-\theta)\tau}[\sin(\theta/2)]^{2\gamma},\qquad\theta\in[0,2\pi],\quad \tau\in\bbR,\quad \gamma>-\frac{1}{2},
\]
where $g:\bbR\rightarrow[0,\infty)$ is $2\pi$-periodic, continuous at $0$ with $g(0)>0$, and such that $w(\theta)$ is integrable on $[0,2\pi]$.  Suppose also that $\{0,2\pi\}\cap\supp(\mu_s)=\emptyset$.  Then uniformly for $a$ and $b$, in compact subsets of $\bbC$ it holds that
\begin{align}\label{kform0}
&\lim_{\nri}\frac{K_n(e^{ia/n},e^{ib/n};\mu)}{K_n(1,1;\mu)}\\
\nonumber&\,=(2\gamma+1)\frac{_1F_1(\bar{y};2\gamma+1;-i\bar{b})_1F_1(y;2\gamma+1;ia)-\,_1F_1(1+\bar{y};2\gamma+1;-i\bar{b})_1F_1(1+y;2\gamma+1;ia)}{i(\bar{b}-a)},
\end{align}
where $y=\gamma+i\tau$.
\end{theorem}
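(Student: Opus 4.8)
The plan is to follow Lubinsky's localization strategy, which reduces the computation of the universality limit to two ingredients: (i) a comparison principle showing that the limit in \eqref{kform0} depends only on the local behavior of $w$ near $\theta=0$, and (ii) an explicit model computation for a measure whose weight is \emph{exactly} the factor $\tfrac{4^{\gamma}|\Gamma(1+\gamma+i\tau)|^2}{\Gamma(2\gamma+1)}e^{(\pi-\theta)\tau}[\sin(\theta/2)]^{2\gamma}$ on all of $[0,2\pi]$. First I would record the standard consequences of regularity for measures on $\partial\bbD$: by \eqref{regprop} together with the continuity of $g$ at $0$ and $g(0)>0$, one gets control on the Christoffel function $\lambda_n(1;\mu)=K_n(1,1;\mu)^{-1}$, namely that $n\lambda_n(1;\mu)$ has the same asymptotics as for the model weight (up to the factor $g(0)$, which cancels in the ratio). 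This is the place where regularity is essential, and it is invoked exactly as in \cite{Lub09, Bourgade}.

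Next I would carry out the model computation. For the pure weight $w_0(\theta)=c_{\gamma,\tau}\,e^{(\pi-\theta)\tau}[\sin(\theta/2)]^{2\gamma}$, the orthonormal polynomials are essentially explicit: writing $z=e^{i\theta}$, this weight is (up to the exponential tilt by $\tau$, i.e. a substitution $\gamma \mapsto \gamma+i\tau$ in the formulas) a Jacobi-type weight on the circle whose monic orthogonal polynomials and whose kernel $K_n$ can be written in terms of the hypergeometric function ${}_2F_1$, and whose rescaled diagonal limit can be read off directly. The key computation is to show that $\lim_{\nri} n^{-1}K_n(e^{ia/n},e^{ib/n};\mu_0)$ converges, locally uniformly in $a,b$, to an entire function of $(a,\bar b)$; the limiting kernel is obtained by passing $n\to\infty$ in the hypergeometric representation, where the confluent limit ${}_2F_1 \to {}_1F_1$ emerges from the classical limit $\lim_{n\to\infty}{}_2F_1(\alpha,n;c;x/n)={}_1F_1(\alpha;c;x)$. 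One then identifies the resulting expression with the right-hand side of \eqref{kform0}; the $\tfrac{1}{i(\bar b-a)}$ denominator and the two-term numerator are the Christoffel--Darboux structure surviving in the limit, and the parameters $y=\gamma+i\tau$, $\bar y = \gamma-i\tau$ arise from the weight and its complex conjugate. I would verify the normalization constant $2\gamma+1$ by evaluating the right-hand side along the diagonal $a=b$ and checking it equals $1$ (using ${}_1F_1(y;2\gamma+1;0)=1$ and an l'Hôpital computation in $\bar b - a$).

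The comparison step is then the bridge: I would use a Lubinsky-type inequality (a one-sided bound on $K_n(z,z;\mu)$ in terms of $K_n(z,z;\nu)$ when $d\mu\le C\,d\nu$ near $z$, together with the variational characterization of the Christoffel function) to show that, because $w/w_0 = g$ is continuous at $0$ with $g(0)\in(0,\infty)$ and because the singular part $\mu_s$ puts no mass near $\{0,2\pi\}$, the rescaled kernels for $\mu$ and for $g(0)\,\mu_0$ have the same limit. Concretely: for every $\epsilon>0$ there is a neighborhood of $\theta=0$ on which $(1-\epsilon)g(0)w_0 \le w \le (1+\epsilon)g(0)w_0$, one compares against the corresponding comparison measures, applies the model limit, and lets $\epsilon\to0$; the contribution of the mass away from $\theta=0$ is negligible after rescaling by standard estimates (this is where $\{0,2\pi\}\cap\supp(\mu_s)=\emptyset$ is used). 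The local uniformity in $a,b$ is propagated through each step because the model kernel limit is locally uniform and the comparison bounds are uniform on compacta.

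The main obstacle I expect is the model computation, specifically producing a sufficiently explicit and \emph{uniform} description of $K_n$ for the weight $w_0$ — handling the $\tau$-tilt rigorously (so that the complex-conjugate parameters $\bar y$ appear correctly), controlling the hypergeometric functions as $n\to\infty$ with the argument scaled like $1/n$, and verifying that the limit is genuinely locally uniform rather than merely pointwise. Bourgade's theorem is the $g\equiv 1$, $\gamma\in(-\tfrac12,0)$ (or $\gamma=0$) case, so for that sub-case the model kernel is already available in the literature \cite{Bourgade,BNR}; the new content is extending the exponent $\gamma$ to the full range $\gamma>-\tfrac12$ and incorporating the continuous factor $g$, and the comparison argument above is designed to reduce the latter to the former. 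A secondary technical point is checking integrability and the hypotheses needed to apply the regularity consequences when $\gamma$ is close to $-\tfrac12$, where $[\sin(\theta/2)]^{2\gamma}$ is barely integrable.
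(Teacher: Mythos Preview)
Your outline follows the same two-step Lubinsky strategy as the paper (model case $g\equiv1$, then localization/comparison), so the architecture is right. Two concrete issues would derail the execution as written.

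First, the scaling is wrong. You assert that $n\lambda_n(1;\mu)$ has a limit and that $n^{-1}K_n(e^{ia/n},e^{ib/n};\mu_0)$ converges. For the model weight $\mutil$ one has $K_n(1,1;\mutil)=\tfrac{n^{2\gamma+1}}{\Gamma(2\gamma+2)}(1+o(1))$ (the paper's Lemma~\ref{kn11}), so the correct normalization throughout is $n^{2\gamma+1}$, not $n$. The ratio in \eqref{kform0} is of course scale-free, but every intermediate Christoffel-function estimate you describe would be off by a factor $n^{2\gamma}$, and in particular the ``standard estimates'' you invoke to kill the far-from-$0$ contribution need to beat a polynomial of different order than you expect.

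Second, a genuinely missing idea: in order for the comparison to transfer the model limit to $\mu$, you need to know that the diagonal limit $G(a):=\lim_{n\to\infty} n^{2\gamma+1}\lambda_n(e^{ia/n};\mutil)$ is \emph{finite and strictly positive} for every $a\in\bbC$. Finiteness is easy, but positivity (equivalently, nonvanishing of the right-hand side of \eqref{kform0} along $a=b$) is not; it requires a separate argument about confluent hypergeometric functions (the paper's Lemmas~\ref{full} and~\ref{1f1}, establishing in particular that ${}_1F_1(\gamma+i\tau;2\gamma+1;iz)e^{-iz/2}$ is in the Hermite--Biehler class). Without this, your sandwich $(1-\epsilon)g(0)w_0\le w\le(1+\epsilon)g(0)w_0$ gives only a one-sided bound on $K_n(e^{ia/n},e^{ia/n};\mu)/K_n(1,1;\mu)$, not the two-sided control needed for normality and for the off-diagonal step. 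The paper then closes the off-diagonal comparison not by a bare sandwich but by invoking a mutual-regularity black box (\cite[Theorem~3.10]{Bourgade}), after first proving the diagonal ratio $K_n(e^{ia/n},e^{ia/n};\mutil)/K_n(e^{ia/n},e^{ia/n};\mu)\to g(0)$ via a peaking-polynomial localization (Lemma~\ref{cratio}); your sketch conflates these two stages.
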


\noindent\textit{Remark.}  If $a=\bar{b}$, then we use continuity to interpret the right-hand side of (\ref{kform0}) as
\begin{align*}
&\bigg(\,_1F_1(1+\bar{y};2\gamma+1;-i\bar{b})\,_1F_1(2+y;2\gamma+2;i\bar{b})(1+y)\\
&\qquad\qquad\qquad\qquad\qquad\qquad\qquad-\,_1F_1(\bar{y};2\gamma+1;-i\bar{b})\,_1F_1(1+y;2\gamma+2;i\bar{b})y\bigg).
\end{align*}

\medskip

\noindent\textit{Remark.}  Notice that if $a$ and $b$ are real and we set $\gamma=\tau=0$ in (\ref{kform0}), then we recover \cite[Theorem 1.1]{LeviLub2}.  For a related result, see \cite[Theorem 5.9]{SY2}.

\smallskip

Although similar, Theorem \ref{singjump} is distinct from Bourgade's result \cite[Theorem 3.2]{Bourgade}.  The most important difference is that Bourgade's result concerns the normalized kernel
\[
\tilde{K}_n(x,y):=\sqrt{w(x)w(y)}\sum_{k=0}^n\varphi_k(x)\overline{\varphi_k(y)},
\]
which is only defined when $x,y\in\partial\bbD$, while our result allows $a$ and $b$ to be complex.  This is especially relevant in determining the local asymptotics of the Christoffel function for complex $a$ (see Corollary \ref{lambdas} below).  Furthermore, if $\gamma<0$, then $\tilde{K}_n$ is undefined if either $x$ or $y$ is equal to $1$ because of the singularity in $w$ there.  Consequently, our Theorem \ref{singjump} includes a stronger uniformity statement than the one given in \cite[Theorem 3.2]{Bourgade}.  Nevertheless, we should mention that our proof of Theorem \ref{singjump} is similar to the proof of \cite[Theorem 3.2]{Bourgade} in that it relies on ideas and methods of Lubinsky (see for example \cite{LubMR}).

As noted in \cite{FMMFS}, the confluent hypergeometric function appears in the scaling limit of correlation functions of the pseudo-Jacobi ensemble in \cite{BO}.  It is also explained in \cite{FMMFS} how one can understand the connection between the sequence of weights in the pseudo-Jacobi ensemble and the weight in Theorem \ref{singjump} (see \cite[Equation 24]{FMMFS} and the accompanying discussion).  The weights considered in \cite{BO} are of the form
\[
(1+x^2)^{-\Real(s)-N}e^{2\Imag(s)\arg(1+ix)},\qquad\qquad x\in\bbR,\qquad N\in\bbN.
\]
If one writes $z=(1+ix)^{-1}$, then this becomes
\[
|z|^{2\Real(s)+2N}e^{2\Imag(s)\arg(1/z)},\qquad\qquad|z-1/2|=1/2,
\]
which has an algebraic singularity and a jump at $0$, just like the weight in our Theorem \ref{singjump} does at $\theta=0$.  Further motivation for the study of measures such as those that we consider in Theorem \ref{singjump} can be found in \cite{Bourgade}.

By setting $\gamma=0$ in Theorem \ref{singjump}, we immediately obtain the following analog of \cite[Theorem 11]{FMMFS} for the unit circle.

\begin{corollary}\label{circlejump}
Let $\mu$ be a regular measure on the unit circle given by $w(\theta)\dtpi+d\mu_s$, where $\mu_s$ is singular with respect to Lebesgue measure.  Suppose
\[
w(\theta)=g(\theta)|\Gamma(i\tau+1)|^2e^{(\pi-\theta)\tau},\qquad\qquad\theta\in[0,2\pi],\quad \tau\in\bbR,
\]
where $g:\bbR\rightarrow[0,\infty)$ is $2\pi$-periodic, integrable on $[0,2\pi]$, and continuous at $0$ with $g(0)>0$.  Suppose also that $\{0,2\pi\}\cap\supp(\mu_s)=\emptyset$.  Then uniformly for $a$ and $b$, in compact subsets of $\bbC$ it holds that
\begin{align}\label{kform1}
&\lim_{\nri}\frac{K_n(e^{ia/n},e^{ib/n};\mu)}{K_n(1,1;\mu)}\\
\nonumber&\qquad\qquad\qquad\qquad=\frac{_1F_1(-i\tau;1;-i\bar{b})_1F_1(i\tau;1;ia)-\,_1F_1(1-i\tau;1;-i\bar{b})_1F_1(1+i\tau;1;ia)}{i(\bar{b}-a)}.
\end{align}
\end{corollary}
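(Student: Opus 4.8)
The plan is simply to specialize Theorem~\ref{singjump} to the parameter value $\gamma = 0$; the corollary then follows once one checks that both the hypotheses and the conclusion degenerate to the asserted forms. First I would verify the weight. At $\gamma = 0$ one has $4^{\gamma} = 1$, $\Gamma(2\gamma+1) = \Gamma(1) = 1$, and $[\sin(\theta/2)]^{2\gamma} = 1$, so the weight appearing in Theorem~\ref{singjump} collapses to
\[
w(\theta) = g(\theta)\,|\Gamma(1+i\tau)|^2\, e^{(\pi-\theta)\tau},
\]
which is exactly the weight in the statement of Corollary~\ref{circlejump}. The constraint $\gamma > -\tfrac{1}{2}$ is satisfied, and the remaining hypotheses---$2\pi$-periodicity and continuity of $g$ at $0$ with $g(0)>0$, integrability of $w$ on $[0,2\pi]$, regularity of $\mu$, and $\{0,2\pi\}\cap\supp(\mu_s)=\emptyset$---coincide verbatim in the two statements. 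Hence every $\mu$ admissible for the corollary is admissible for Theorem~\ref{singjump} with $\gamma = 0$.

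It remains to check that the right-hand side of (\ref{kform0}) reduces to the right-hand side of (\ref{kform1}) when $\gamma = 0$. In that case $y = \gamma + i\tau = i\tau$, and since $\tau$ is real we have $\bar{y} = \overline{i\tau} = -i\tau$; moreover the second parameter $2\gamma+1$ of each confluent hypergeometric factor equals $1$, and the overall prefactor $(2\gamma+1)$ equals $1$. Substituting these values termwise into (\ref{kform0}) produces precisely (\ref{kform1}), with the same uniformity in $a$ and $b$ over compact subsets of $\bbC$. This completes the argument. There is essentially no obstacle here: the only points requiring any care are the bookkeeping of the Gamma-function normalizations and the identification $\bar{y} = -i\tau$, both of which are routine.
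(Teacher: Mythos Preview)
Your proposal is correct and matches the paper's approach exactly: the paper simply states that the corollary is obtained ``by setting $\gamma=0$ in Theorem~\ref{singjump}'' and gives no further argument. The only slight imprecision is your claim that the integrability hypotheses ``coincide verbatim''---Theorem~\ref{singjump} requires $w$ integrable while the corollary requires $g$ integrable---but since at $\gamma=0$ one has $w(\theta)=g(\theta)\cdot c\,e^{(\pi-\theta)\tau}$ with the exponential factor bounded above and below on $[0,2\pi]$, the two conditions are equivalent and your reduction goes through.
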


\noindent\textit{Remark.}  If $a=\bar{b}$, we interpret the expression on the right-hand side of (\ref{kform1}) as
\[
_1F_1(1-i\tau;1;-i\bar{b})\,_1F_1(2+i\tau;2;i\bar{b})(i\tau+1)-\,_1F_1(-i\tau;1;-i\bar{b})\,_1F_1(1+i\tau;2;i\bar{b})(i\tau)
\]

\medskip

If we set $\tau=0$ in Theorem \ref{singjump}, then we obtain the following result, which is reminiscent of \cite[Theorem 1.4]{Danka2}.

\begin{corollary}\label{circlesing}
Let $\mu$ be a regular measure on the unit circle given by $w(\theta)\dtpi+d\mu_s$, where $\mu_s$ is singular with respect to Lebesgue measure.  Suppose
\[
w(\theta)=g(\theta)\frac{4^{\gamma}|\Gamma(1+\gamma)|^2}{\Gamma(2\gamma+1)}[\sin(\theta/2)]^{2\gamma},\qquad\qquad\theta\in[0,2\pi],\qquad \gamma>-\frac{1}{2},
\]
where $g:\bbR\rightarrow[0,\infty)$ is $2\pi$-periodic, continuous at $0$ with $g(0)>0$, and such that $w(\theta)$ is integrable on $[0,2\pi]$.  Suppose also that $\{0,2\pi\}\cap\supp(\mu_s)=\emptyset$.  Then uniformly for $a$ and $b$, in compact subsets of $\bbC$ it holds that
\begin{align}\label{kform2}
&\lim_{\nri}\frac{K_n(e^{ia/n},e^{ib/n};\mu)}{K_n(1,1;\mu)}\\
\nonumber&\quad=(2\gamma+1)\frac{_1F_1(\gamma;2\gamma+1;-i\bar{b})_1F_1(\gamma;2\gamma+1;ia)-\,_1F_1(1+\gamma;2\gamma+1;-i\bar{b})_1F_1(1+\gamma;2\gamma+1;ia)}{i(\bar{b}-a)}
\end{align}
\end{corollary}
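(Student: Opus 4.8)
The plan is to obtain Corollary \ref{circlesing} as the special case $\tau=0$ of Theorem \ref{singjump}, so essentially no new analysis is required; the only things to verify are that the hypotheses specialize correctly and that the right-hand side of (\ref{kform0}) collapses to that of (\ref{kform2}).

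First I would check the weight. Putting $\tau=0$ into the formula for $w(\theta)$ in Theorem \ref{singjump}, the factor $e^{(\pi-\theta)\tau}$ becomes $1$ and $|\Gamma(1+\gamma+i\tau)|^2$ becomes $|\Gamma(1+\gamma)|^2$ (which is just $\Gamma(1+\gamma)^2$, since $\gamma>-\frac{1}{2}$ makes $\Gamma(1+\gamma)$ real and positive), so
\[
w(\theta)=g(\theta)\,\frac{4^{\gamma}|\Gamma(1+\gamma)|^2}{\Gamma(2\gamma+1)}\,[\sin(\theta/2)]^{2\gamma},
\]
which is exactly the weight in Corollary \ref{circlesing}. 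All remaining hypotheses --- regularity of $\mu$, the properties of $g$ at $0$, integrability of $w$ on $[0,2\pi]$, and $\{0,2\pi\}\cap\supp(\mu_s)=\emptyset$ --- are identical in the two statements, so Theorem \ref{singjump} applies verbatim with this choice of $\tau$.

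Next I would simplify the limiting kernel. With $\tau=0$ the parameter $y=\gamma+i\tau$ becomes $y=\gamma$, which is real, hence $\bar{y}=\gamma$ as well. Substituting $y=\bar{y}=\gamma$ into (\ref{kform0}) replaces $_1F_1(\bar{y};2\gamma+1;-i\bar{b})$ and $_1F_1(y;2\gamma+1;ia)$ by $_1F_1(\gamma;2\gamma+1;-i\bar{b})$ and $_1F_1(\gamma;2\gamma+1;ia)$, and likewise $1+y$ and $1+\bar{y}$ both become $1+\gamma$, while the denominator $i(\bar{b}-a)$ and the prefactor $2\gamma+1$ are unchanged. This yields precisely the right-hand side of (\ref{kform2}), and the uniformity over $a,b$ in compact subsets of $\bbC$ is inherited directly from Theorem \ref{singjump}. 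The $a=\bar{b}$ case of the remark following Corollary \ref{circlesing} is obtained in the same way from the remark following Theorem \ref{singjump}, again by setting $y=\bar{y}=\gamma$. Since every step is a direct substitution, there is no genuine obstacle; the only point requiring a moment's care is confirming that no further cancellation or reindexing occurs among the confluent hypergeometric factors when $y$ is specialized to a real value, which the computation above rules out.
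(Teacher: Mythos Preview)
Your proposal is correct and matches the paper's approach exactly: the paper obtains Corollary \ref{circlesing} simply by setting $\tau=0$ in Theorem \ref{singjump}, and your write-up just spells out the trivial verifications that the weight and limiting kernel specialize as claimed.
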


\noindent\textit{Remark.}  If $a=\bar{b}$, we interpret the expression on the right-hand side of (\ref{kform2}) as
\begin{align*}
&\bigg(\,_1F_1(1+\gamma;2\gamma+1;-i\bar{b})\,_1F_1(2+\gamma;2\gamma+2;i\bar{b})(1+\gamma)\\
&\qquad\qquad\qquad\qquad\qquad\qquad\qquad-\,_1F_1(\gamma;2\gamma+1;-i\bar{b})\,_1F_1(1+\gamma;2\gamma+2;i\bar{b})\gamma\bigg).
\end{align*}

\bigskip

The form of the limiting kernel in (\ref{kform1}) could be guessed from the results in \cite{FMMFS}.  One distinction of our results is that the proof does not rely on the methods of Riemann-Hilbert analysis.  Instead we rely on the method that has been pioneered by Lubinsky and has lead to many new results on this topic.



\bigskip

In recent years there has also been interest in proving universality results for measures supported on sets other than the unit circle or real line.  Such results include measures supported on arcs of the unit circle (see \cite{LubNg}), smooth Jordan regions (see \cite{LubBerg,SY}), and smooth Jordan curves (see \cite{LeviLub}).  Our second main result is a comparable result for a collection of polynomial lemniscates.

Given any $r>0$ and $m\in\bbN$, let $G_{r,m}$ be the region given by $G_{r,m}:=\{z:|z^m-1|<r^m\}$.  For any set $X\subseteq\bbC$, we will denote its boundary by $\partial X$.  For a measure $\mu$ on $G_{r,m}$, we will let $\mu'$ denote its Radon-Nikodym derivative with respect to area measure on $G_{r,m}$.

\begin{figure}[h!]\label{Gpic}
\centering
\includegraphics[width=0.52\textwidth]{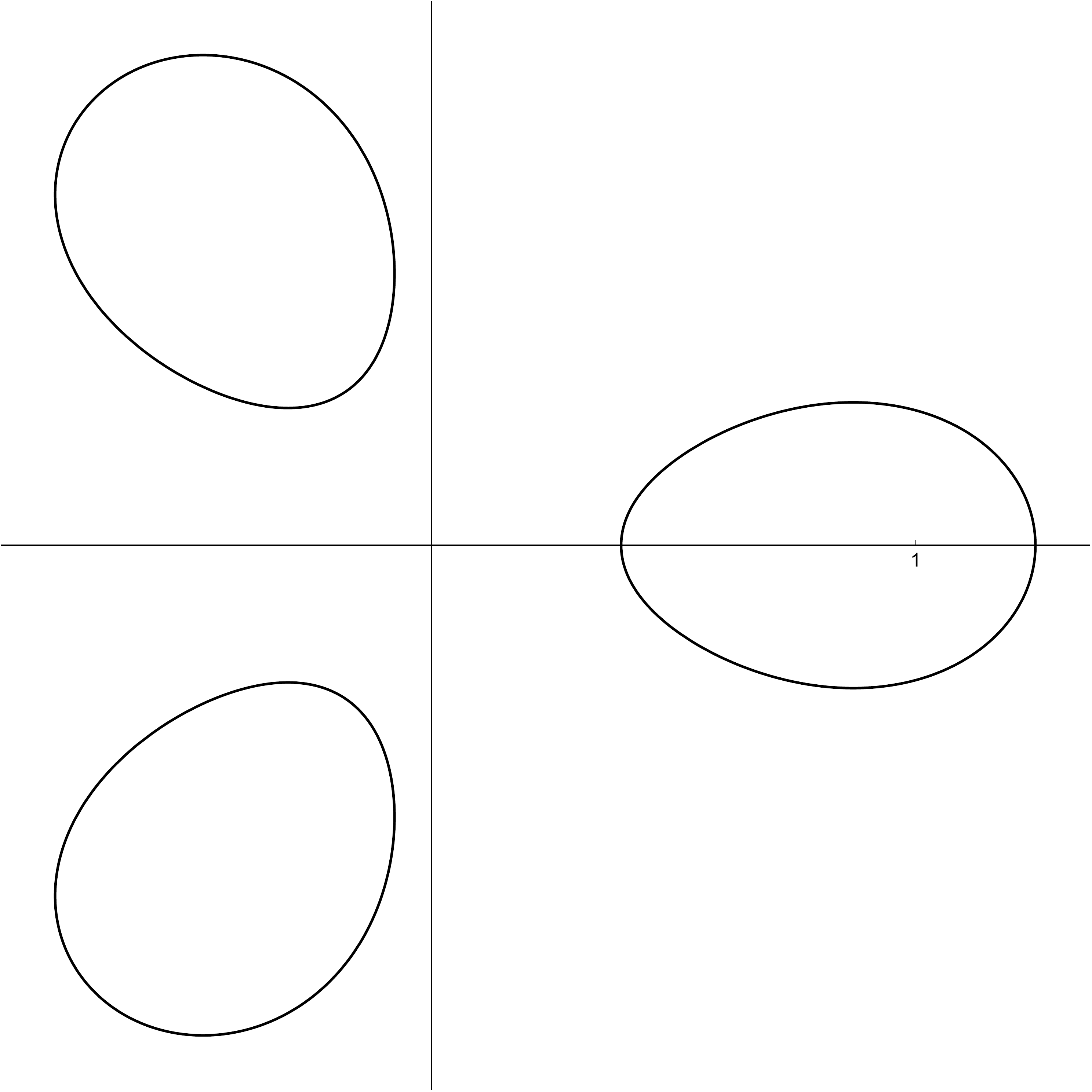}
\caption{A Mathematica plot of the boundary of the region $G_{r,3}$ when $r=\sqrt[3]{0.94}$.}
\end{figure}

\begin{theorem}\label{bergl}
Let $r\in(0,1)$ and $m\in\bbN$ be fixed and pick some $z^*_0$ in the boundary of $G_{r,m}$.  Let $\mu$ be a regular measure on $\overline{G}_{r,m}$ and suppose that there is an open disk $D$ centered at $z_0^*$ such that on $G_{r,m}\cap D$, $\mu$ is absolutely continuous with respect to planar Lebesgue measure.  Assume moreover that $\mu'$ is positive and continuous at each point of $\partial G_{r,m}\cap D$.  Then uniformly for $z_0$ in compact subsets of $\partial G_{r,m}\cap D$ and $a,b$ in compact subsets of $\bbC$ it holds that
\begin{align}\label{kform3}
\lim_{\nri}\frac{K_n(z_0+a/n,z_0+b/n;\mu)}{K_n(z_0,z_0;\mu)}=H\left(\frac{\overline{w}_0az_0^{m-1}+w_0\bar{b}\bar{z}_0^{m-1}}{r^{m}}\right),
\end{align}
where $w_0=\frac{z_0^m-1}{r^m}\in\partial\bbD$ and
\[
H(t)=\,_1F_1(2;3;t)=\begin{cases}
2\frac{e^t(t-1)+1}{t^2} \qquad & t\neq0\\
1 \qquad & t=0.
\end{cases}
\]
\end{theorem}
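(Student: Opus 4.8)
The plan is to follow Lubinsky's localization and comparison strategy, adapted to the lemniscate geometry. The key observation is that the polynomial map $z\mapsto z^m-1$ sends $G_{r,m}$ onto the disk $\{|\zeta|<r^m\}$, and near a boundary point $z_0^*$ (away from the branch points where $z^m-1$ is critical, i.e. away from $z=0$ — and since $r<1$, the origin is not in $\overline{G}_{r,m}$, so the map is locally biholomorphic on a neighborhood of each boundary point). Thus locally the geometry is that of a disk, for which the Bergman-type (area-measure) universality limit is already known to be governed by $H(t)={}_1F_1(2;3;t)$; see \cite{LubBerg,SY}. The first step is therefore to recall the known universality result on the disk $\bbD$ with a weight that is positive and continuous at a boundary point, giving the limiting kernel in terms of $H$, and to record the precise scaling: for the unweighted Bergman kernel of the unit disk, $K_n(z,w)\sim$ a reproducing kernel whose diagonal blows up like $n^2$ and whose rescaled off-diagonal limit at a boundary point $\zeta_0\in\partial\bbD$ is $H(n(\overline{\zeta_0}(\zeta-\zeta_0)+\zeta_0(\overline{\eta}-\overline{\zeta_0})))$ after suitable normalization.

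The second step is to transfer orthogonality through the conformal map. Let $\psi(z)=z^m-1$, so $\psi$ maps $\overline{G}_{r,m}$ onto $\overline{r^m\bbD}$ and, for $r<1$, is $m$-to-$1$ globally but locally injective on a disk $D$ about $z_0^*\in\partial G_{r,m}$. If $P$ is a polynomial of degree $n$ then $P$ is not generally a function of $\zeta=\psi(z)$, so the transfer is not exact; this is where the comparison-measure technique of Lubinsky enters. The idea is to compare $\mu$ with a model measure $\nu$ on $\overline{G}_{r,m}$ that is exactly rotation-covariant under $z\mapsto \omega z$ for $\omega^m=1$ and whose Christoffel function and reproducing kernel can be computed by pulling back the Bergman structure of the disk; concretely, take $\nu$ proportional to area measure (or a finite modification supported away from $D$). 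For $\nu$, the space of polynomials of degree $\le n$ decomposes along residues mod $m$, and on each residue class the relevant inner product becomes, under $\zeta=z^m-1$, essentially a weighted Bergman inner product on $r^m\bbD$; this yields an explicit formula for $K_n(z,w;\nu)$ near the boundary and the desired limit with the stated argument $\frac{\overline{w}_0 a z_0^{m-1}+w_0\bar b\bar z_0^{m-1}}{r^m}$ — the factor $z_0^{m-1}$ being exactly $\psi'(z_0)/m$ up to the scaling, and $w_0=\psi(z_0)/r^m\in\partial\bbD$ playing the role of the boundary point on the model disk.

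The third step is the localization/comparison argument proper: since $\mu$ is regular on $\overline{G}_{r,m}$ and agrees with an absolutely continuous measure with positive continuous density $\mu'$ on $G_{r,m}\cap D$, one shows that the universality limit for $\mu$ at $z_0\in\partial G_{r,m}\cap D$ coincides with that for the model measure $\nu$ adjusted so that $\nu'(z_0)=\mu'(z_0)$. The standard ingredients are: (i) regularity gives the root asymptotics \eqref{regprop}-type bound, hence sub-exponential control on polynomials in $L^\infty$ versus $L^2(\mu)$ on and near $\supp(\mu)$; (ii) a Christoffel-function asymptotic $n^2\lambda_n(z_0,\mu)\to c/\mu'(z_0)$ at the boundary point, obtained by upper and lower bounds using extremal polynomials pulled back from the disk; (iii) Lubinsky's inequality / the ``$|K_n(a,b)|\le \sqrt{K_n(a,a)K_n(b,b)}$'' bound together with the diagonal asymptotics to get normal families and extract subsequential limits; and (iv) the reproducing property to identify the limit as the model kernel $H$. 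The main obstacle — the step I expect to be genuinely delicate — is item (ii)/(iii) in the lemniscate setting: one must establish the diagonal Christoffel asymptotics and the needed equicontinuity \emph{uniformly} for $z_0$ in compact subsets of $\partial G_{r,m}\cap D$ and for $a,b$ complex, handling the fact that $\psi$ is only locally injective and that contributions from the other $m-1$ sheets of the lemniscate must be shown to be negligible; this requires the usual Lubinsky machinery (comparison of Christoffel functions for $\mu$ and $\nu$ via a ``squeezing'' with dilated model measures, plus the regularity-based estimate to absorb the global/non-local part) carried out carefully in the pulled-back coordinate $\zeta=z^m-1$.
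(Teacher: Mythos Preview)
Your overall strategy matches the paper's: establish the result for a model measure (area measure $\mu_0$ on $G_{r,m}$) and then transfer to general $\mu$ via Lubinsky's comparison machinery. The general-case step is essentially identical in spirit to what the paper does, and your identification of the Christoffel-function asymptotics as the crucial input is correct.

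Where you diverge is in the treatment of the model case. You propose to push the disk universality result from \cite{LubBerg,SY} through the branched covering $\psi(z)=z^m-1$, decomposing the polynomial space by residues mod $m$ and viewing each piece as a weighted Bergman problem on $r^m\bbD$. This is a reasonable heuristic, but after the change of variables each residue class $z^sQ(z^m)$ lands in a \emph{different} weighted Bergman space (the Jacobian contributes a factor of $|z|^{-(m-1)}$ and the prefactor $|z^s|$ another), so you would have to sum $m$ separate scaled-disk kernels with distinct weights and check that the cross-terms and weight-dependent constants conspire to give a single $H$ with the stated argument. That bookkeeping is not automatic. The paper bypasses this by invoking the explicit formulas of Gustafsson--Putinar--Saff--Stylianopoulos \cite{Islands}, which express $\Phi_{km+s}(z;\mu_0)$ directly in terms of OPUC for a one-parameter family of analytic circle weights $\gamma_{r,m}^{(v)}$; the model-case kernel is then computed by a direct (if lengthy) asymptotic evaluation of the resulting sums, with the residue classes handled one by one and then recombined.

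For the localization step, the paper also avoids your anticipated ``main obstacle'' entirely: rather than deriving the boundary Christoffel asymptotics $n^2\lambda_n(z_0;\mu)\to 2\pi\mu'(z_0)r^{2m}|z_0|^{-2(m-1)}$ from scratch via squeezing against dilated disks, it quotes Totik's result \cite[Theorem 1.4]{ToTrans}, which applies to general (possibly disconnected) domains and gives the needed uniformity immediately. Your worry about contributions from the other $m-1$ sheets is therefore absorbed into that citation and into the explicit Islands formulas; you never need to argue separately that the non-local sheets are negligible. In short, your plan is not wrong, but the paper's route is considerably shorter because it leans on two ready-made ingredients (\cite{Islands} for the model orthogonal polynomials, \cite{ToTrans} for the Christoffel asymptotics) that you propose to rebuild by hand.
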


\noindent\textit{Remark.}  Notice that if we define $\phi(z)=\frac{z^m-1}{r^m}$, then the right-hand side of (\ref{kform2}) becomes
\[
H\left(a\frac{\phi'(z_0)}{m}\overline{\phi(z_0)}+\bar{b}\frac{\overline{\phi'(z_0)}}{m}\phi(z_0)\right).
\]
From this it is clear that if we substitute $m=1$ into (\ref{kform3}), then we recover the result of \cite[Theorem 1.1]{LubBerg} in the case of the disk of radius $r$ centered at $1$.

\smallskip

\noindent\textit{Remark.}  The argument of $a\phi'(z_0)\overline{\phi(z_0)}$ appearing in this expression is the angle between $a$ and the outward normal to $G_{r,m}$ at $z_0$ (see  \cite[Remark 5.1]{SY}).

\smallskip

\noindent\textit{Remark.}  An alternative formula for $H$ is given by
\[
H(t)=2\int_0^1xe^{tx}\,dx,
\]
which means this kernel fits into the family of kernels discussed in \cite[Section 5.4]{SY2}.

\bigskip

All of our proofs will utilize the method of Lubinsky, which relies on Christoffel function estimates.  For a measure $\mu$ and $n\in\bbN$, we define the Christoffel function $\lambda_n(z;\mu)$ by
\[
\lambda_n(z;\mu):=\inf\left\{\|P\|^2_{L^2(\mu)}:\deg(P)\leq n,\, P(z)=1\right\}.
\]
It is well-known and easy to show that
\[
\lambda_n(z;\mu)=\frac{1}{K_n(z,z;\mu)}
\]
and the infimum defining $\lambda_n$ is a minimum with extremal function given by $P(w)=\frac{K_n(w,z;\mu)}{K_n(z,z;\mu)}$.  This relationship tells us that the diagonal of $K_n$ satisfies certain monotonicity properties, which will be helpful in our later analysis.  Specifically, if $\mu\geq\nu$, then $K_n(z,z;\mu)\leq K_n(z,z;\nu)$.  For more about Christoffel functions and their relation to orthogonal polynomials, we refer the reader to \cite{MNT,NevaiFreud}.

\bigskip

The rest of the paper is devoted to the proofs of the results we have already stated.  In Section \ref{circproof} we will prove Theorem \ref{singjump} and in Section \ref{bergproof} we will prove Theorem \ref{bergl}.  In both cases, the key step is establishing the needed asymptotics in an appropriate model case that we will use for comparison.  In the unit circle setting, we will rely on results from \cite{Bourgade,BNR,Sri}.  In the lemniscate setting, we will rely on results from \cite{Islands} and \cite{ToTrans}.

\subsection{Notation}

For any complex number $v$ and $n\in\{0,1,2,\ldots\}$, we let $(v)_n$ denote the rising factorial
\[
(v)_n=\begin{cases}
1 & \qquad n=0\\
v(v+1)\cdots(v+n-1) & \qquad n>0.
\end{cases}
\]
If $\Phi_n(z)$ is the degree $n$ monic orthogonal polynomial for a measure $\mu$, then we define the polynomial $\Phi_n^*$ as in \cite{OPUC1} by
\[
\Phi_n^*(z):=z^n\overline{\Phi_n(1/\bar{z})}.
\]

\section{Proof of Theorem \ref{singjump}}\label{circproof}

In this section, we present a proof of Theorem \ref{singjump}.  To do so, we will first need a comparison example, and for that we turn to the results in \cite{Sri}.

\subsection{A Model Case.}\label{firstc}

We recall the results of \cite{Sri}, which concern the measure $\mutil$ defined by
\[
d\mutil(\theta)=\frac{4^{\gamma}|\Gamma(1+\gamma+i\tau)|^2}{\Gamma(2\gamma+1)}e^{(\pi-\theta)\tau}[\sin(\theta/2)]^{2\gamma}\frac{d\theta}{2\pi},\qquad0\leq\theta\leq2\pi,\quad\tau\in\bbR,\quad \gamma>-\frac{1}{2}.
\]
In \cite{Bourgade}, the measure $\mutil$ is said to have a \textit{Hua-Pickrell density} and in \cite[Section 3.2]{BNR}, the weight is called a \textit{Fisher-Hartwig function}.  It is shown in \cite[Theorem 4.1]{Sri} and the proof of \cite[Theorem 4.2]{Sri} that the monic orthogonal polynomials and the leading coefficients of the orthonormal polynomials are given by
\begin{align*}
\Phi_n(z)&=\frac{(2\gamma+1)_n}{(1+\gamma+i\tau)_n}\,_2F_1(-n,1+\gamma+i\tau;2\gamma+1;1-z),\\
\Phi_n^*(z)&=\frac{(2\gamma+1)_n}{(1+\gamma-i\tau)_n}\,_2F_1(-n,\gamma+i\tau;2\gamma+1;1-z),\\
\kappa_n&=\frac{|(1+\gamma+i\tau)_n|}{\sqrt{n!(2\gamma+1)_n}}
\end{align*}
(see also \cite[Section 3.2.1]{Bourgade}, compare with \cite[Proposition 1.2]{BO}).  With this information, one can deduce the following fact, which appears in the proof of \cite[Theorem 5]{BNR}.



\begin{lemma}\label{kn11}
For the measure $\mutil$, it holds that
\begin{align}
\nonumber K_n(1,1;\mutil)&=\frac{n^{2\gamma+1}}{\Gamma(2\gamma+2)}(1+o(1))
\end{align}
as $\nri$.
\end{lemma}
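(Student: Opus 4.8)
The plan is to extract $K_n(1,1;\mutil)$ directly from the explicit formulas for $\Phi_n$, $\Phi_n^*$, and $\kappa_n$ recalled from \cite{Sri}. First I would use the Christoffel--Darboux formula for orthogonal polynomials on the unit circle, which expresses $K_n(z,w;\mutil)$ in terms of $\varphi_{n}^*$ (or $\varphi_{n+1}^*$) rather than a sum; specializing to $z=w=1$ gives
\[
K_n(1,1;\mutil)=\kappa_n^2\big(|\Phi_{n+1}^*(1)|^2-|\Phi_{n+1}(1)|^2\big)\big/(1-|\alpha_n|^2)
\]
or an equivalent closed form (the precise version from \cite{OPUC1} that involves $\Phi_n^*$ and the Verblunsky coefficient). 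Evaluating the hypergeometric expressions at $z=1$ is easy because $\,_2F_1(-n,\cdot\,;\cdot\,;0)=1$, so $\Phi_n(1)$, $\Phi_n^*(1)$ reduce to ratios of Pochhammer symbols: $\Phi_n^*(1)=\frac{(2\gamma+1)_n}{(1+\gamma-i\tau)_n}$ and similarly for $\Phi_n(1)$.

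The main work is then the asymptotic analysis of these Pochhammer ratios as $n\to\infty$. Using $(v)_n=\Gamma(v+n)/\Gamma(v)$ together with Stirling's formula in the form $\Gamma(v+n)/\Gamma(n+1)\sim n^{v-1}$, one gets
\[
\frac{(2\gamma+1)_n}{n!}=\frac{\Gamma(2\gamma+1+n)\,\Gamma(1)}{\Gamma(2\gamma+1)\,\Gamma(n+1)}\sim\frac{n^{2\gamma}}{\Gamma(2\gamma+1)},
\]
and $\big|(1+\gamma+i\tau)_n\big|^2\sim |\Gamma(1+\gamma+i\tau)|^{-2}\,n^{2\gamma+1}\cdot n\,$-type expressions, which combine so that $\kappa_n^2=\frac{|(1+\gamma+i\tau)_n|^2}{n!\,(2\gamma+1)_n}\sim \frac{n^{2\gamma+1}}{|\Gamma(1+\gamma+i\tau)|^2\,\Gamma(2\gamma+1)}\cdot(\text{lower order})$. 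The delicate point is that $|\Phi_{n+1}^*(1)|^2-|\Phi_{n+1}(1)|^2$ is a \emph{difference} of two quantities that are each comparably large, so a naive leading-order estimate of each term is not enough; one must keep the next-order term in the expansion of the Pochhammer ratios (equivalently, expand $\Gamma(v+n)/\Gamma(n+1)=n^{v-1}(1+\frac{(v-1)v}{2n}+\cdots)$ to order $1/n$) so that the leading $n^{2\gamma+1}$ contributions cancel and the genuine $n^{2\gamma+1}$ coefficient of the difference survives. This cancellation, tracked carefully, should produce exactly the factor $\Gamma(2\gamma+2)^{-1}$ after recombination with $\kappa_n^2$.

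I expect this cancellation/next-order bookkeeping to be the only real obstacle; everything else (evaluating ${}_2F_1$ at $0$, invoking Christoffel--Darboux, applying Stirling) is routine. An alternative that sidesteps the subtraction is to compute $K_n(1,1;\mutil)=\sum_{m=0}^n|\varphi_m(1)|^2=\sum_{m=0}^n\kappa_m^2|\Phi_m(1)|^2$ and note $\kappa_m^2|\Phi_m(1)|^2=\frac{|(1+\gamma+i\tau)_m|^2}{m!\,(2\gamma+1)_m}\cdot\frac{|(2\gamma+1)_m|^2}{|(1+\gamma+i\tau)_m|^2}=\frac{(2\gamma+1)_m}{m!}\sim\frac{m^{2\gamma}}{\Gamma(2\gamma+1)}$; summing, $\sum_{m=0}^n m^{2\gamma}\sim \frac{n^{2\gamma+1}}{2\gamma+1}$, giving $K_n(1,1;\mutil)\sim\frac{n^{2\gamma+1}}{(2\gamma+1)\Gamma(2\gamma+1)}=\frac{n^{2\gamma+1}}{\Gamma(2\gamma+2)}$, which is cleaner and avoids the cancellation entirely. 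I would present this second route as the main argument, using dominated-type comparison of the partial sums with $\int_0^n x^{2\gamma}\,dx$ to control the $o(1)$, and cite \cite{BNR} for the statement.
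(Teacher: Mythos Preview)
Your second route is correct and is essentially what the paper has in mind: the paper gives no argument of its own but simply records the explicit formulas for $\Phi_n$, $\Phi_n^*$, $\kappa_n$ and then cites the proof of \cite[Theorem 5]{BNR} for the conclusion. In fact your computation can be sharpened to an exact identity: since $|\varphi_m(1)|^2=\kappa_m^2|\Phi_m(1)|^2=\dfrac{(2\gamma+1)_m}{m!}$ and the telescoping identity $\sum_{m=0}^n\dfrac{(2\gamma+1)_m}{m!}=\dfrac{(2\gamma+2)_n}{n!}$ holds, one obtains $K_n(1,1;\mutil)=\dfrac{(2\gamma+2)_n}{n!}$ exactly, after which a single application of $\Gamma(n+c)/\Gamma(n+1)\sim n^{c-1}$ gives the lemma with no need for the integral comparison.
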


By applying Lemma \ref{kn11}, we conclude that if $a,b\in\bbR\setminus\{0\}$ and $a\neq b$, then\footnote{Even though we are assuming $a,b\in\bbR$ in this calculation, we write $\bar{b}$ in the limiting expression because the limiting kernel in Theorem \ref{singjump} is analytic in $a$ and $\bar{b}$.}  (with $y=\gamma+i\tau$)
\begin{align*}
&\lim_{\nri}\frac{K_n(e^{ia/n},e^{ib/n};\mutil)}{K_n(1,1;\mutil)}=\Gamma(2\gamma+2)\lim_{\nri}\frac{K_n(e^{ia/n},e^{ib/n};\mutil)}{n^{2\gamma+1}}\\
&\,=(2\gamma+1)\frac{_1F_1(\bar{y};2\gamma+1;-i\bar{b})_1F_1(y;2\gamma+1;ia)-\,_1F_1(1+\bar{y};2\gamma+1;-i\bar{b})_1F_1(1+y;2\gamma+1;ia)}{i(\bar{b}-a)},
\end{align*}
where the last equality is given in the proof of \cite[Theorem 5]{BNR}.  This proves the desired result when $\mu=\mutil$ and for certain values of $a$ and $b$.


To complete the proof and to prove the assertions about uniformity when $\mu=\mutil$, we will show that
\[
\left\{\frac{K_n(e^{ia/n},e^{i\bar{b}/n};\mutil)}{K_n(1,1;\mutil)}\right\}_{n\in\bbN}
\]
is a normal family in the complex variables $a$ and $b$.  The key to the calculation will be the following lemma.

\begin{lemma}\label{fbound}
If $y=\gamma+i\tau$ with $\tau\in\bbR$ and $\gamma>-1/2$, then for any compact set $K\subset \bbC$ there is a constant $C_K$ such that for any $a\in K$, $n\in\bbN$, and $m\in\{0,1,\ldots,n\}$ it holds that
\[
|\,_2F_1(-m,1+y;2\gamma+1;1-e^{ia/n})|\leq \,_2F_1(-n,1+|y|;2\gamma+1;-C_K/n).
\]
\end{lemma}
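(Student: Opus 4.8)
The plan is to use the fact that the first parameter $-m$ is a nonpositive integer, so that
\[
{}_2F_1(-m,1+y;2\gamma+1;w)=\sum_{k=0}^{m}\frac{(-m)_k\,(1+y)_k}{(2\gamma+1)_k\,k!}\,w^k
\]
is a polynomial in $w$ of degree $m$, and then to estimate the coefficients and the argument $w=1-e^{ia/n}$ separately. Note first that the hypothesis $\gamma>-1/2$ gives $2\gamma+1>0$, hence $(2\gamma+1)_k>0$ for every $k\ge 0$; thus no denominator vanishes and all of these factors are positive, which is what makes a termwise comparison legitimate.

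For the coefficients, I would observe that for $0\le k\le m\le n$ one has $|(-m)_k|=m(m-1)\cdots(m-k+1)\le n(n-1)\cdots(n-k+1)=|(-n)_k|$ (with $(-m)_k=0$ for $k>m$), while the triangle inequality gives $|(1+y)_k|=\prod_{j=0}^{k-1}|(1+j)+y|\le\prod_{j=0}^{k-1}\bigl((1+j)+|y|\bigr)=(1+|y|)_k$. Hence, taking absolute values term by term,
\[
\bigl|{}_2F_1(-m,1+y;2\gamma+1;w)\bigr|\le\sum_{k=0}^{n}\frac{|(-n)_k|\,(1+|y|)_k}{(2\gamma+1)_k\,k!}\,|w|^{k}.
\]

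To handle the argument, set $M:=\max_{a\in K}|a|$. The power series of the exponential yields $|1-e^{ia/n}|\le e^{|a|/n}-1\le (|a|/n)e^{|a|/n}\le (M/n)e^{M}$ for all $n\ge 1$ and all $a\in K$, so $|w|\le C_K/n$ with $C_K:=Me^{M}$. Substituting $|w|^{k}\le (C_K/n)^{k}$ into the previous display and using that, since $C_K>0$, one has $(-n)_k(-C_K/n)^{k}=|(-n)_k|(C_K/n)^{k}$, the right-hand side becomes exactly
\[
\sum_{k=0}^{n}\frac{(-n)_k\,(1+|y|)_k}{(2\gamma+1)_k\,k!}\left(\frac{-C_K}{n}\right)^{k}={}_2F_1\!\left(-n,1+|y|;2\gamma+1;-C_K/n\right),
\]
which is the asserted inequality; it is uniform over $a\in K$ since $C_K$ depends only on $M$. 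There is no serious obstacle here: the only points requiring care are verifying that $\gamma>-1/2$ forces every Pochhammer factor $(2\gamma+1)_k$ to be strictly positive, so that the majorizing series dominates termwise, and noticing that the sign pattern of $(-n)_k$ against $(-C_K/n)^{k}$ is precisely what repackages the majorant as a single ${}_2F_1$ evaluated at a negative argument. Everything else is bookkeeping.
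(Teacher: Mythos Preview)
Your proof is correct and follows essentially the same route as the paper: expand the polynomial ${}_2F_1$ as a finite sum, majorize $|(1+y)_k|\le(1+|y|)_k$ and $\binom{m}{k}\le\binom{n}{k}$ (equivalently $|(-m)_k|\le|(-n)_k|$), and bound $|1-e^{ia/n}|\le C_K/n$ uniformly on $K$. Your version is in fact slightly more explicit, since you spell out why $(2\gamma+1)_k>0$ and give a concrete choice $C_K=Me^{M}$.
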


\begin{proof}
The proof is by direct calculation.  We can find a constant $C_K$ so that if $a\in K$, then
\begin{align*}
&|\,_2F_1(-m,1+y;2\gamma+1;1-e^{ia/n})|\\
&\qquad\qquad\qquad=\left|\sum_{k=0}^m\binom{m}{k}\frac{(1+y)_k}{(2\gamma+1)_k}(e^{ia/n}-1)^k\right|\leq\sum_{k=0}^m\binom{m}{k}\frac{(1+|y|)_k}{(2\gamma+1)_k}\left(\frac{C_K}{n}\right)^k\\
&\qquad\qquad\qquad\leq\sum_{k=0}^n\binom{n}{k}\frac{(1+|y|)_k}{(2\gamma+1)_k}\left(\frac{C_K}{n}\right)^k=\,_2F_1(-n,1+|y|;2\gamma+1;-C_K/n).
\end{align*}
\end{proof}

Now, we can use Lemma \ref{kn11}, Lemma \ref{fbound}, and basic properties of the $\Gamma$ function to conclude that if $a,b$ are in some compact set $K$, then there are constants $C$, $C'$, and $C_K$ so that
\begin{align*}
&\left|\frac{K_n(e^{ia/n},e^{i\bar{b}/n};\mutil)}{K_n(1,1;\mutil)}\right|\\
&\quad\leq\frac{C}{n^{2\gamma+1}}\sum_{m=0}^n\left|\,_2F_1(-m,1+y;2\gamma+1;1-e^{ia/n})\,_2F_1(-m,1+y;2\gamma+1;1-e^{ib/n})\right|\frac{\Gamma(2\gamma+1+m)}{\Gamma(m+1)}\\
&\quad\leq\frac{C'}{n}\sum_{m=0}^n\left(\,_2F_1(-n,1+|y|;2\gamma+1;-C_K/n)\right)^2\\
&\quad=C' \left(\,_2F_1(-n,1+|y|;2\gamma+1;-C_K/n)\right)^2.
\end{align*}
As $\nri$, this last quantity is bounded (see \cite[Equation (50)]{BNR}) and hence we have shown the desired normality property.  The fact that the limiting kernel that we calculated when both $a$ and $b$ are real (and distinct) is analytic in the variables $a$ and $\bar{b}$ implies the desired uniform convergence on compact subsets of $\bbC\times\bbC$.

\smallskip

We have thus proven Theorem \ref{singjump} (and the subsequent corollaries) in the special case $\mu=\mutil$.

\subsection{Hypergeometric Functions}\label{hyper}

In this section we prove some auxiliary results about hypergeometric functions and the functions that appear in our main results.  For any $a\in\bbR$, define
\begin{align}\label{wlater}
T(a):=&\bigg(\,_1F_1(1+\gamma;2\gamma+1;-ia)\,_1F_1(2+\gamma;2\gamma+2;ia)(1+\gamma)\\
\nonumber&\qquad\qquad\qquad\qquad\qquad\qquad\qquad-\,_1F_1(\gamma;2\gamma+1;-ia)\,_1F_1(1+\gamma;2\gamma+2;ia)\gamma\bigg),
\end{align}
which is the function appearing in the remark after the statement of Corollary \ref{circlesing}, but restricted to a real variable.  Our first lemma shows that $T$ never vanishes in $\bbR$.

\begin{lemma}\label{full}
If $\gamma>-1/2$, then $T(a)$ is a non-vanishing function on $\bbR$.
\end{lemma}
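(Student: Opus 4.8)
The plan is to realize $T(a)$ as (a constant multiple of) the diagonal value of a universality kernel that we already understand, and then use the positivity of reproducing kernels. Concretely, $T(a)$ is exactly the expression appearing in the remark after Corollary \ref{circlesing}, i.e. the value of the right-hand side of \eqref{kform2} in the confluent (diagonal) case $a=\bar b$. Thus, if I can identify $T(a)$ with $c\cdot\lim_{\nri}\frac{K_n(e^{ia/n},e^{ia/n};\mu)}{K_n(1,1;\mu)}$ for some measure $\mu$ to which Corollary \ref{circlesing} applies and some positive constant $c$, then $T(a)>0$ for all real $a$, since each $K_n(e^{ia/n},e^{ia/n};\mu)$ is strictly positive (it is a sum of squared moduli $\sum_{m=0}^n|\varphi_m(e^{ia/n})|^2$, and $\varphi_0\neq 0$) and $K_n(1,1;\mu)>0$ likewise. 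The cleanest choice is the model measure $\mutil$ with $\tau=0$, for which all the asymptotics were established in Section \ref{firstc}; for that measure the limit in \eqref{kform2} holds and equals $T(a)$ up to the explicit factor $(2\gamma+1)$, which is positive since $\gamma>-1/2$.

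First I would record that $T(a)$ is obtained from the $a=\bar b$ specialization of \eqref{kform2} in Corollary \ref{circlesing} by dividing out $(2\gamma+1)$; this is immediate from comparing \eqref{wlater} with the remark after that corollary (set $b=a$ real). Next I would invoke the results of Section \ref{firstc}: for $\mu=\mutil$ with $\tau=0$ we have, uniformly for $a$ in compact subsets of $\bbR$,
\[
\lim_{\nri}\frac{K_n(e^{ia/n},e^{ia/n};\mutil)}{K_n(1,1;\mutil)}=(2\gamma+1)T(a),
\]
since the limiting kernel is continuous and the off-diagonal formula \eqref{kform2} passes to the diagonal by the continuity interpretation given in the remark. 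Finally, the left-hand side is a limit of strictly positive numbers, hence is $\geq 0$; to rule out the value $0$ I would note that the convergence is uniform on compacta and that at $a=0$ the left-hand side equals $1$ (the normalization), and more generally one can argue that the limit is continuous and, being a normalized diagonal kernel limit, cannot vanish — alternatively, one can appeal directly to the known formula, which shows the limit is real-analytic in $a$, and combine non-negativity with the fact that it is $1$ at the origin plus a connectedness/analyticity argument. Dividing by $(2\gamma+1)>0$ gives $T(a)\neq 0$ on $\bbR$.

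The main obstacle is the passage from "$\geq 0$" to "strictly positive," i.e. excluding isolated zeros of the limiting diagonal kernel. A limit of strictly positive quantities is only guaranteed to be non-negative, so I must supply an extra argument. The most robust route is analyticity: the right-hand side of \eqref{kform2} with $b=a$ is (the boundary value of) an entire function of $a$, and it equals $1$ at $a=0$; if it vanished at some real $a_0$, then because it is real-valued on $\bbR$ (being a diagonal Christoffel–Darboux-type limit) and real-analytic, $a_0$ would be an isolated zero of even order, and one would need to derive a contradiction from a finer property of the kernel. A more transparent alternative, which I would prefer to present, is to use the integral representation: by Lemma \ref{kn11}-type reasoning and the orthonormal-polynomial asymptotics of Section \ref{firstc}, the normalized diagonal kernel converges to $\int$ of a square (a genuine $L^2$-norm squared of a limiting reproducing-kernel section), hence is strictly positive as the norm of a nonzero function; the nonzero-ness of that limiting function follows because its value at the "center" is nonzero. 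I expect that writing $T(a)$ in such a manifestly positive form — either as $(2\gamma+1)^{-1}$ times a limit of $\lambda_n(1;\mutil)/\lambda_n(e^{ia/n};\mutil)$, or via an explicit integral like the one in the last remark after Theorem \ref{bergl} — is the crux, and the remaining bookkeeping with $\Gamma$-factors and the $\tau=0$ specialization is routine.
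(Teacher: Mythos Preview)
Your setup is right: for the model measure $\sigma=\mutil|_{\tau=0}$, the diagonal limit $\lim_{n\to\infty}K_n(e^{ia/n},e^{ia/n};\sigma)/K_n(1,1;\sigma)$ equals $T(a)$ (not $(2\gamma+1)T(a)$; the remark after Corollary~\ref{circlesing} already records the full right-hand side of \eqref{kform2} at $a=\bar b$, and one checks $T(0)=1$). So you immediately get $T(a)\geq 0$ on $\bbR$.

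The genuine gap is exactly where you flag it: upgrading $\geq 0$ to $>0$. None of the routes you sketch actually closes it. Real-analyticity plus $T(0)=1$ plus connectedness does \emph{not} preclude an isolated zero of even order; a nonnegative real-analytic function can certainly vanish. The ``norm of a nonzero limiting function'' idea is too vague to be a proof: you would have to produce a concrete Hilbert space and a concrete nonzero element whose norm squared equals $T(a)$, and nothing in Section~\ref{firstc} hands you that. Rewriting $T(a)$ as a limit of Christoffel-function ratios is just a restatement of the same nonnegativity.

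The paper's argument supplies the missing idea. Suppose $T(a_0)=0$ for some real $a_0$. By Cauchy--Schwarz,
\[
\frac{|K_n(e^{ia_0/n},e^{ib/n};\sigma)|}{K_n(1,1;\sigma)}
\le \sqrt{\frac{K_n(e^{ia_0/n},e^{ia_0/n};\sigma)}{K_n(1,1;\sigma)}}\,
\sqrt{\frac{K_n(e^{ib/n},e^{ib/n};\sigma)}{K_n(1,1;\sigma)}},
\]
and the first factor tends to $0$ while the second is bounded (by the normality already shown in Section~\ref{firstc}). Hence the \emph{off-diagonal} limiting kernel in \eqref{kform2} vanishes for this fixed $a_0$ and \emph{every} $b\in\bbC$. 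Setting $b=0$ forces ${}_1F_1(\gamma;2\gamma+1;ia_0)={}_1F_1(1+\gamma;2\gamma+1;ia_0)\neq 0$ (the nonvanishing uses \cite{Wynn}); dividing through then gives ${}_1F_1(\gamma;2\gamma+1;-i\bar b)\equiv{}_1F_1(1+\gamma;2\gamma+1;-i\bar b)$ as entire functions of $b$, which is false (compare derivatives at $0$). That contradiction is the step your proposal is missing.
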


\begin{proof}
Let $\sigma$ be the measure $\mutil$ when $\tau=0$.  Lemma \ref{kn11} tells us that
\[
\lim_{\nri}\frac{K_n(1,1;\sigma)}{n^{2\gamma+1}}=\frac{1}{\Gamma(2\gamma+2)}>0.
\]
Suppose for contradiction that $a\in\bbR$ is such that $T(a)=0$.  Note that we know $a\neq0$.  Corollary \ref{circlesing} in the case $\mu=\sigma$ shows that
\[
\lim_{\nri}\frac{K_n(e^{ia/n},e^{ia/n};\sigma)}{K_n(1,1;\sigma)}=T(a)=0.
\]
The Cauchy-Schwarz inequality tells us that for any $b\in\bbC$, we have
\[
\frac{|K_n(e^{ia/n},e^{ib/n};\sigma)|}{K_n(1,1;\sigma)}\leq\sqrt{\frac{K_n(e^{ia/n},e^{ia/n};\sigma)}{K_n(1,1;\sigma)}}\sqrt{\frac{K_n(e^{ib/n},e^{ib/n};\sigma)}{K_n(1,1;\sigma)}}.
\]
The first square root tends to $0$ as $\nri$ and the second tends to some finite limit as $\nri$, so we have
\[
\lim_{\nri}\frac{K_n(e^{ia/n},e^{ib/n};\sigma)}{K_n(1,1;\sigma)}=0
\]
for all $b\in\bbC$.  However, this implies that the expression appearing on the right-hand side of (\ref{kform2}) is equal to $0$ for all values of $b\in\bbC$.  If we plug in $b=0$, then we see that
\[
_1F_1(\gamma;2\gamma+1;ia)=\,_1F_1(1+\gamma;2\gamma+1;ia)\neq0,
\]
where the last inequality follows from \cite[Theorem 1]{Wynn}.  Dividing through by this quantity on the right-hand side of (\ref{kform2}) shows
\[
_1F_1(\gamma;2\gamma+1;-i\bar{b})=\,_1F_1(1+\gamma;2\gamma+1;-i\bar{b})
\]
for all complex numbers $b$.  In other words, the functions
\[
_1F_1(\gamma;2\gamma+1;z)\qquad\qquad\mbox{ and }\qquad\qquad\,_1F_1(1+\gamma;2\gamma+1;z)
\]
are the same function.  However, these functions do not have the same derivative at $0$, so this gives us our contradiction.
\end{proof}

\noindent\textit{Remark.}  It would be interesting to see if one could prove Lemma \ref{full} without using reproducing kernel asymptotics.

\smallskip

Our next technical lemma will be helpful in establishing the finiteness of certain asymptotics in the next section.

\begin{lemma}\label{1f1}
For any complex number $a$ and $\gamma>-1/2$, it holds that
\begin{align}\label{zeroo}
(2\gamma+1)\frac{|\,_1F_1(\gamma;2\gamma+1;ia)|^2-|\,_1F_1(1+\gamma;2\gamma+1;ia)|^2}{2\Imag(a)}>0,
\end{align}
where if $a\in\bbR$, then we interpret this expression as $T(a)$.
\end{lemma}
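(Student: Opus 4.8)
The plan is to recognize the left-hand side of \eqref{zeroo} as a limit of normalized reproducing kernels for the model measure $\sigma$ (the measure $\mutil$ with $\tau=0$), and then to read off positivity from the positivity of $K_n$ on the diagonal. Concretely, I would first let $\sigma$ denote $\mutil$ with $\tau=0$, so that $y=\gamma$ is real. For $a\in\bbC$ with $\Imag(a)\neq 0$, set $b=\bar a$ in Corollary \ref{circlesing}, so that $e^{ib/n}=\overline{e^{i\bar a/n}}=e^{i\bar a/n}$ — wait, more carefully, with the conventions of the paper the diagonal-type quantity $K_n(e^{ia/n},e^{i\bar b/n};\sigma)$ with $b=a$ evaluates the kernel at the conjugate pair, and by Corollary \ref{circlesing} (applied with the roles of $a$ and $b$ so that $\bar b = a$, i.e. $b=\bar a$) the ratio $K_n(e^{ia/n},e^{ia/n};\sigma)/K_n(1,1;\sigma)$ converges to
\[
(2\gamma+1)\frac{\,_1F_1(\gamma;2\gamma+1;-ia)\,_1F_1(\gamma;2\gamma+1;ia)-\,_1F_1(1+\gamma;2\gamma+1;-ia)\,_1F_1(1+\gamma;2\gamma+1;ia)}{i(a-a)},
\]
and since $\overline{\,_1F_1(\gamma;2\gamma+1;ia)}=\,_1F_1(\gamma;2\gamma+1;-i\bar a)$ is \emph{not} what I want — I need to be slightly more careful about which variable is conjugated. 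The cleanest route: in Corollary \ref{circlesing} replace $a$ by $a$ and $b$ by $\bar a$, noting $\bar b = a$; then the numerator becomes $\,_1F_1(\bar y;2\gamma+1;-ia)\,_1F_1(y;2\gamma+1;ia)-\cdots$ with $y=\gamma$ real, i.e. exactly $|\,_1F_1(\gamma;2\gamma+1;ia)|^2-|\,_1F_1(1+\gamma;2\gamma+1;ia)|^2$, and the denominator is $i(\bar b - a)=i(a-a)=0$, so one uses the remark after Corollary \ref{circlesing}. To avoid this degeneracy I instead keep $b$ generic and take a limit: I would show the left side of \eqref{zeroo} equals $\lim_{\nri}\frac{K_n(e^{ia/n},e^{i\bar a/n};\sigma)}{K_n(1,1;\sigma)}$ via Corollary \ref{circlesing} (with the substitution making the numerator $|\,_1F_1(\gamma;2\gamma+1;ia)|^2-|\,_1F_1(1+\gamma;2\gamma+1;ia)|^2$ and denominator $i(\bar b-a)\big|_{b=\bar a}$ interpreted through continuity as $2\Imag(a)$ after pulling out $i$, since $\bar b - a = a - a$... ). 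The book-keeping of signs and which argument carries the bar is exactly the point to get right, so I would do it once and carefully, matching against the $a=\bar b$ remark.

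Granting that identification, the rest is immediate: $K_n(e^{ia/n},e^{i\bar a/n};\sigma)$ is, up to the convention, the value of the reproducing kernel at a conjugate-symmetric pair of points, which equals $\sum_{k=0}^n |\varphi_k(e^{ia/n})|^2 \geq 0$ if the two arguments are genuine complex conjugates of points on $\partial\bbD$; but $e^{ia/n}$ and $e^{i\bar a/n}$ are reciprocal-conjugate rather than equal when $a\notin\bbR$, so in fact $K_n(e^{ia/n},e^{i\bar a/n};\sigma)=\sum_{k=0}^n \varphi_k(e^{ia/n})\overline{\varphi_k(e^{i\bar a/n})}$. The clean positivity statement I actually want is that this quantity is positive; the way the paper's remark after Corollary \ref{circlesing} (and the analogous setup in Lemma \ref{full}) handles the $a=\bar b$ case strongly suggests that $K_n(e^{ia/n},e^{i\bar a/n};\sigma)$ should be compared instead to $K_n$ evaluated at a point and its \emph{true} complex conjugate. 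I would therefore reduce to the standard positivity: for $z\in\bbC$, $K_n(z,z;\sigma)=\sum_k|\varphi_k(z)|^2>0$, and the expression in \eqref{zeroo} is the $\nri$ limit of $K_n(z_n,z_n;\sigma)/K_n(1,1;\sigma)$ for a suitable sequence $z_n\to 1$ with $z_n$ obtained from $a$; strict positivity of the limit (not just $\geq 0$) follows because $\varphi_0\equiv$ const $\neq 0$ gives a uniform lower bound on $K_n(z_n,z_n;\sigma)$ while $K_n(1,1;\sigma)=\frac{n^{2\gamma+1}}{\Gamma(2\gamma+2)}(1+o(1))$ by Lemma \ref{kn11}, and — crucially — the limiting value is already known to be the entire function $T(a)$ (or its complex extension) which by Lemma \ref{full} is non-vanishing on $\bbR$; combining the known non-vanishing with the sign being inherited from the manifestly nonnegative kernel ratio upgrades ``$\neq 0$'' to ``$> 0$''.

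The main obstacle, and the only real content beyond bookkeeping, is justifying that the limit of $K_n(\cdot,\cdot;\sigma)/K_n(1,1;\sigma)$ at the relevant complex arguments is actually given by the closed-form expression in \eqref{zeroo} — i.e., that one may legitimately apply Corollary \ref{circlesing} with complex $a$ and with $b$ set equal to a value forcing the conjugate-symmetric evaluation, and that the resulting expression after dividing out $i$ and taking the $\bar b\to a$ limit is precisely $(2\gamma+1)\big(|\,_1F_1(\gamma;2\gamma+1;ia)|^2-|\,_1F_1(1+\gamma;2\gamma+1;ia)|^2\big)/(2\Imag(a))$. This is not hard given Corollary \ref{circlesing} already allows complex $a,b$ uniformly on compacts, but it does require carefully tracking the analytic continuation in $a$ and $\bar b$ and confirming the $2\Imag(a)$ in the denominator matches $i(\bar b - a)$ under the substitution. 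Once that identification is in hand, positivity is forced: the ratio $K_n(z,z;\sigma)/K_n(1,1;\sigma)$ is nonnegative for every $n$ (it is $\|K_n(\cdot,z;\sigma)/K_n(z,z;\sigma)\|$-related, or simply $\sum|\varphi_k(z)|^2\geq 0$ divided by a positive number), so its limit is $\geq 0$; and it cannot be $0$ because Lemma \ref{full} (the $\tau=0$ case) shows the limiting expression is non-vanishing. Hence \eqref{zeroo} holds with strict inequality, and for real $a$ the expression is interpreted as $T(a)$, which is positive by the same two ingredients (Lemma \ref{kn11} plus Lemma \ref{full}).
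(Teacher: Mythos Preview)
Your identification of the expression in \eqref{zeroo} with the diagonal limit
\[
\lim_{\nri}\frac{K_n(e^{ia/n},e^{ia/n};\sigma)}{K_n(1,1;\sigma)}
\]
(via Corollary \ref{circlesing} with $b=a$, using $\overline{_1F_1(s;t;z)}={}_1F_1(s;t;\bar z)$ for real $s,t$, and $i(\bar a-a)=2\Imag(a)$) is correct, and this indeed gives the nonstrict inequality $\geq 0$ immediately. The paper does exactly this as its first step.

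The gap is in your passage from $\geq 0$ to $>0$ when $\Imag(a)\neq 0$. You invoke Lemma \ref{full}, but that lemma asserts only that $T(a)\neq 0$ for $a\in\bbR$; it says nothing about complex $a$, and the expression in \eqref{zeroo} for $\Imag(a)\neq 0$ is not $T(a)$. Your alternative suggestion---that $|\varphi_0|^2$ gives a uniform lower bound on $K_n(e^{ia/n},e^{ia/n};\sigma)$---does not help either, since that lower bound is a fixed constant while $K_n(1,1;\sigma)\sim n^{2\gamma+1}/\Gamma(2\gamma+2)\to\infty$, so the ratio could still tend to $0$. In short, you have established $\geq 0$ but have no mechanism to rule out equality off the real axis.

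The paper closes this gap with a genuinely different idea: it forms the ratio
\[
\Theta(a)=\frac{_1F_1(1+\gamma;2\gamma+1;ia)}{_1F_1(\gamma;2\gamma+1;ia)},
\]
which (by \cite{Wynn}) is analytic on $\{\Imag(a)\geq 0\}$, satisfies $|\Theta(a)|=1$ on $\bbR$ (via Kummer's transformation), and satisfies $|\Theta(a)|\leq 1$ on the upper half-plane by the nonnegativity you already proved. Since $\Theta$ is nonconstant, the Maximum Modulus Principle forces $|\Theta(a)|<1$ strictly when $\Imag(a)>0$, which is exactly the strict inequality in \eqref{zeroo}; the lower half-plane is handled by $1/\Theta$. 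This Maximum Modulus step is the missing ingredient in your argument.
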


\begin{proof}
Let us keep our definition of $\sigma$ from the previous proof.  We will use ideas from the proof of \cite[Proposition 12]{FMMFS}.  First, notice that when $s$ and $t$ are real, we have
\[
\overline{_1F_1(s;t;z)}=\,_1F_1(s;t;\bar{z}).
\]
Therefore, Corollary \ref{circlesing} in the case $\mu=\sigma$ implies
\begin{align}\label{fup}
\frac{|\,_1F_1(\gamma;2\gamma+1;ia)|^2-|\,_1F_1(1+\gamma;2\gamma+1;ia)|^2}{\Imag(a)}\geq0.
\end{align}
To show that this inequality is strict when $\Imag(a)\neq0$, consider the function
\[
\Theta(a):=\frac{_1F_1(1+\gamma;2\gamma+1;ia)}{\,_1F_1(\gamma;2\gamma+1;ia)}=\frac{e^{ia}\,_1F_1(\gamma;2\gamma+1;-ia)}{\,_1F_1(\gamma;2\gamma+1;ia)},
\]
where we used Kummer's transformation, which says
\[
_1F_1(x;y;z)=e^{z}\,_1F_1(y-x;y;-z)
\]
(see \cite[Equation 12a]{Buch}).  We know from \cite[Theorem 1]{Wynn}, that $\Theta(a)$ is analytic on $\{a:\Imag(a)\geq0\}$ and it is easy to see that $|\Theta(a)|=1$ when $a$ is real.  Equation (\ref{fup}) tells us that $|\Theta(a)|\leq1$ when $\Imag(a)>0$ and $\Theta(a)$ is not constant, so by the Maximum Modulus Principle, we must have $|\Theta(a)|<1$ when $\Imag(a)>0$, so this proves the claim when $\Imag(a)>0$.  We prove the claim when $\Imag(a)<0$ by considering $1/\Theta(a)$ and a similar argument.  The statement for $a\in\bbR$ is Lemma \ref{full}. 
\end{proof}

Following the notation of \cite{LubHB}, we recall that the Hermite-Biehler class of entire functions is the set of all entire functions $E$ that have no zeros in the upper half-plane $\{z:\Imag(z)>0\}$ and satisfy
\[
|E(z)|\geq|E(\bar{z})|,\qquad\qquad \Imag(z)>0.
\]
The proof of Lemma \ref{1f1} tells us that when $\gamma>-1/2$, the function $_1F_1(\gamma;2\gamma+1;iz)e^{-iz/2}$ is in the Hermite-Biehler class. A related result was proven in \cite[Proposition 12]{FMMFS}.

\subsection{The General Case}\label{generalc}

Now we will employ Lubinsky's method and prove Theorem \ref{singjump} in the general case.  The first step is to control the diagonal of the reproducing kernel, and to do this, we will use Christoffel functions.  Our next lemma concerns the measure $\mutil$ from Section \ref{firstc}.

\begin{lemma}\label{lambdatil}
If $\gamma>-1/2$, then there is a continuous function $G:\bbC\rightarrow(0,\infty)$ such that
\begin{align}\label{w2}
\lim_{\nri}n^{2\gamma+1}\lambda_n(e^{ia/n};\mutil)=G(a)
\end{align}
and the convergence is uniform on compact subsets of $\bbC$.
\end{lemma}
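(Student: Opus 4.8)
The plan is to route everything through the identity $\lambda_n(z;\mutil)=1/K_n(z,z;\mutil)$ and read off the diagonal behaviour of $K_n(\cdot,\cdot;\mutil)$ from the model-case analysis of Section~\ref{firstc}. That analysis established that $K_n(e^{ia/n},e^{ib/n};\mutil)/K_n(1,1;\mutil)$ converges, uniformly for $(a,b)$ in compact subsets of $\bbC\times\bbC$, to the right-hand side of (\ref{kform0}). Specializing to $b=a$ and using Lemma~\ref{kn11} to replace $K_n(1,1;\mutil)$ by its asymptotic value $n^{2\gamma+1}/\Gamma(2\gamma+2)$, one obtains
\[
\frac{K_n(e^{ia/n},e^{ia/n};\mutil)}{n^{2\gamma+1}}\longrightarrow\frac{\mathcal{K}(a)}{\Gamma(2\gamma+2)}\qquad\text{as }\nri
\]
uniformly on compact subsets of $\bbC$, where, with $y=\gamma+i\tau$ and using $\overline{{}_1F_1(y;2\gamma+1;ia)}={}_1F_1(\bar y;2\gamma+1;-i\bar a)$, the limiting diagonal simplifies to
\[
\mathcal{K}(a)=(2\gamma+1)\,\frac{|{}_1F_1(y;2\gamma+1;ia)|^2-|{}_1F_1(1+y;2\gamma+1;ia)|^2}{2\,\Imag(a)},
\]
interpreted by continuity (as in the remark after Theorem~\ref{singjump}) when $a\in\bbR$. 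The natural candidate is therefore $G(a):=\Gamma(2\gamma+2)/\mathcal{K}(a)$, and the whole lemma reduces to showing that $\mathcal{K}(a)$ is strictly positive for every $a\in\bbC$.

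Nonnegativity of $\mathcal{K}$ is immediate, since each $K_n(e^{ia/n},e^{ia/n};\mutil)/n^{2\gamma+1}$ is nonnegative. For strict positivity I would argue directly from the explicit formulas recalled in Section~\ref{firstc}: since $\varphi_m=\kappa_m\Phi_m$, a short computation gives
\[
|\varphi_m(z;\mutil)|^2=\frac{(2\gamma+1)_m}{m!}\,\bigl|{}_2F_1(-m,1+y;2\gamma+1;1-z)\bigr|^2,
\]
so that $K_n(e^{ia/n},e^{ia/n};\mutil)=\sum_{m=0}^n\frac{(2\gamma+1)_m}{m!}\bigl|{}_2F_1(-m,1+y;2\gamma+1;1-e^{ia/n})\bigr|^2$. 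Fix a compact $K\subset\bbC$, let $C_K$ be as in the proof of Lemma~\ref{fbound}, and pick $\delta\in(0,1)$. The computation in that proof gives, for $a\in K$ and $0\le m\le\delta n$,
\[
\bigl|{}_2F_1(-m,1+y;2\gamma+1;1-e^{ia/n})-1\bigr|\le\sum_{k\ge1}\frac{(1+|y|)_k}{(2\gamma+1)_k\,k!}\,(\delta C_K)^k,
\]
and the right side is at most $\tfrac12$ once $\delta$ is small (independently of $n$ and $a\in K$). Hence the $|{}_2F_1|^2$ factor in the first $\lfloor\delta n\rfloor$ summands exceeds $\tfrac14$, and since $\sum_{m=0}^{M}\frac{(2\gamma+1)_m}{m!}=\frac{(2\gamma+2)_M}{M!}\sim\frac{M^{2\gamma+1}}{\Gamma(2\gamma+2)}$ (using $2\gamma+1>0$), we get $K_n(e^{ia/n},e^{ia/n};\mutil)\ge c\,(\delta n)^{2\gamma+1}$ for all large $n$ and all $a\in K$, with $c>0$ depending only on $K$. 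Dividing by $n^{2\gamma+1}$ and letting $\nri$ shows that $\mathcal{K}$ is bounded below by a positive constant on $K$; in particular $\mathcal{K}(a)>0$ for every $a$, so $G=\Gamma(2\gamma+2)/\mathcal{K}$ is a well-defined function $\bbC\to(0,\infty)$, and it is continuous since $\mathcal{K}$ is (being the specialization to $b=a$ of the right side of (\ref{kform0}), which is real-analytic).

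Finally, combining the uniform convergence of $K_n(e^{ia/n},e^{ia/n};\mutil)/n^{2\gamma+1}$ with the locally uniform positive lower bound just established, the reciprocals converge locally uniformly, so $n^{2\gamma+1}\lambda_n(e^{ia/n};\mutil)=\bigl(K_n(e^{ia/n},e^{ia/n};\mutil)/n^{2\gamma+1}\bigr)^{-1}\to G(a)$ uniformly on compact subsets of $\bbC$, which is (\ref{w2}). The one genuinely delicate point is the locally uniform strict positivity of $\mathcal{K}$: it is the $\tau\neq0$ counterpart of Lemmas~\ref{full} and~\ref{1f1}. Besides the elementary head-of-the-sum bound above, one can obtain it from the integral representation $\mathcal{K}(a)=(2\gamma+1)\int_0^1 t^{2\gamma}\,|{}_1F_1(1+y;2\gamma+1;iat)|^2\,dt$ — proved by recognizing the sum for $K_n(e^{ia/n},e^{ia/n};\mutil)/n^{2\gamma+1}$ as a Riemann sum, using the confluence ${}_2F_1(-m,1+y;2\gamma+1;1-e^{ia/n})\to{}_1F_1(1+y;2\gamma+1;iat)$ as $m/n\to t$ together with the bounds of Lemma~\ref{fbound} and the integrability of $t^{2\gamma}$ on $[0,1]$ — whose integrand is positive near $t=0$, or from a maximum-modulus argument patterned on the proof of Lemma~\ref{1f1}.
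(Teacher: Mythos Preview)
Your argument is correct, and it takes a genuinely different route from the paper's. The paper proves positivity of the limit by sandwiching $\mutil$ between constant multiples of the $\tau=0$ measure $\sigma$ (so that $\lambda_n(\cdot;\mutil)$ is sandwiched between constant multiples of $\lambda_n(\cdot;\sigma)$), and then for $\sigma$ it computes the diagonal limit explicitly and invokes Lemma~\ref{1f1}, whose proof in turn rests on Lemma~\ref{full}, a maximum-modulus argument, and Wynn's non-vanishing theorem for ${}_1F_1$. You instead bound $K_n(e^{ia/n},e^{ia/n};\mutil)=\sum_m\frac{(2\gamma+1)_m}{m!}\,|{}_2F_1(-m,1+y;2\gamma+1;1-e^{ia/n})|^2$ from below directly, by keeping only the first $\lfloor\delta n\rfloor$ terms and noting that for small $\delta$ each hypergeometric factor is uniformly close to $1$; the telescoping identity $\sum_{m\le M}\frac{(2\gamma+1)_m}{m!}=\frac{(2\gamma+2)_M}{M!}$ then gives the needed $cn^{2\gamma+1}$ growth. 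This is more elementary and self-contained: it handles general $\tau$ in one stroke, without the reduction to $\tau=0$, without Lemmas~\ref{full} or~\ref{1f1}, and without appealing to zero-location results for ${}_1F_1$. The paper's approach, by contrast, makes the $\tau=0$ limit fully explicit and ties the positivity to the Hermite--Biehler structure; but since the Hermite--Biehler corollary that follows Lemma~\ref{lambdatil} is a \emph{consequence} of the lemma rather than an input, your route would still recover that corollary. The integral representation $\mathcal{K}(a)=(2\gamma+1)\int_0^1 t^{2\gamma}\,|{}_1F_1(1+y;2\gamma+1;iat)|^2\,dt$ you mention as an alternative is a nice byproduct and gives another transparent reason $\mathcal{K}>0$.
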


\noindent\textit{Remark.}  If $a$ is a real number and $\gamma=0$, then $G(a)$ can be bounded above and below by appealing to results in \cite{MNT}.

\begin{proof}
Existence and continuity of the limit (as an extended real number) follow from the calculations in Section \ref{firstc}.  Since the convergence demonstrated in Section \ref{firstc} was uniform in $a$ and $b$ in compact subsets of $\bbC$, the statement about uniformity will follow once we bound the function $G$ away from zero and infinity on compact sets.

To estimate the limit, notice that there are positive constants $C_1$ and $C_2$ so that $C_1\sigma\leq\mutil\leq C_2\sigma$, where $\sigma$ is the measure $\mutil$ but with $\tau=0$.  Therefore, due to the monotonicity properties of Christoffel functions it suffices to establish the desired bounds in the case $\tau=0$.  Lemma \ref{kn11} and Corollary \ref{circlesing} in the case $\mu=\sigma$ tell us
\begin{align*}
\lim_{\nri}n^{2\gamma+1}\lambda_n(e^{ia/n};\sigma)=\Gamma(2\gamma+2)\begin{cases}
\frac{2\Imag(a)/(2\gamma+1)}{|\,_1F_1(\gamma;2\gamma+1;ia)|^2-|\,_1F_1(1+\gamma;2\gamma+1;ia)|^2} & \qquad \Imag(a)\neq0\\
1/T(a)& \qquad \Imag(a)=0
\end{cases}
\end{align*}
The desired conclusion now follows from Lemma \ref{1f1}.
\end{proof}

Lemma \ref{lambdatil} tells us that the right-hand side of (\ref{kform0}) is never $0$ when $a=b$.  Therefore, we can strengthen our earlier observation and generalize \cite[Theorem 1]{Wynn} as follows.

\begin{corollary} 
If $\gamma>-1/2$ and $\tau\in\bbR$, then $_1F_1(\gamma+i\tau;2\gamma+1;iz)e^{-iz/2}$ is in the Hermite-Biehler class.
\end{corollary}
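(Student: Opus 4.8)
The plan is to read off both defining properties of the Hermite--Biehler class at once from the diagonal value of the model kernel already computed in Section~\ref{firstc}. Put $E(z):={}_1F_1(\gamma+i\tau;2\gamma+1;iz)\,e^{-iz/2}$; since $2\gamma+1>0$ is not a nonpositive integer, ${}_1F_1(\gamma+i\tau;2\gamma+1;\,\cdot\,)$ is entire, hence so is $E$. The first step is to identify $E^*(z):=\overline{E(\bar z)}$. Conjugating the defining power series coefficientwise (the parameter $2\gamma+1$ is real, so the coefficients conjugate through $\gamma+i\tau\mapsto\gamma-i\tau$) gives $\overline{{}_1F_1(\gamma+i\tau;2\gamma+1;i\bar z)}={}_1F_1(\gamma-i\tau;2\gamma+1;-iz)$, and Kummer's transformation $\,{}_1F_1(x;y;w)=e^{w}\,{}_1F_1(y-x;y;-w)$ (\cite[Equation 12a]{Buch}) turns this into $e^{-iz}\,{}_1F_1(1+\gamma+i\tau;2\gamma+1;iz)$; combining with $\overline{e^{-i\bar z/2}}=e^{iz/2}$ yields the symmetric formula $E^*(z)={}_1F_1(1+\gamma+i\tau;2\gamma+1;iz)\,e^{-iz/2}$. (The factor $e^{-iz/2}$ in the definition of $E$ is included precisely so that $E$ and $E^*$ come out in this parallel form.)

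The second and central step is to recognize the right-hand side of (\ref{kform0}), taken with $\mu=\mutil$ and $b=a$, as a Hermite--Biehler-type kernel built from $E$. From $\overline{{}_1F_1(\gamma+i\tau;2\gamma+1;ia)}={}_1F_1(\gamma-i\tau;2\gamma+1;-i\bar a)=\overline{E(a)}\,e^{-i\bar a/2}$, the analogous identity with $\gamma+i\tau$ replaced by $1+\gamma+i\tau$, and ${}_1F_1(\gamma+i\tau;2\gamma+1;ia)=E(a)\,e^{ia/2}$, a short computation shows that the right-hand side of (\ref{kform0}) at $b=a$ equals
\[
(2\gamma+1)\,e^{-\Imag(a)}\,\frac{|E(a)|^{2}-|E^*(a)|^{2}}{2\,\Imag(a)} .
\]
On the other hand this quantity is $\lim_{\nri}K_n(e^{ia/n},e^{ia/n};\mutil)/K_n(1,1;\mutil)=\lim_{\nri}\lambda_n(1;\mutil)/\lambda_n(e^{ia/n};\mutil)$, which by Lemma~\ref{kn11} and Lemma~\ref{lambdatil} equals $\Gamma(2\gamma+2)/G(a)$, a \emph{strictly} positive number for every $a\in\bbC$. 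Therefore, for every $a$ with $\Imag(a)>0$,
\[
|E(a)|^{2}-|E^*(a)|^{2}=\frac{2\,\Imag(a)\,e^{\Imag(a)}}{2\gamma+1}\cdot\frac{\Gamma(2\gamma+2)}{G(a)}>0 .
\]

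From this strict inequality both Hermite--Biehler requirements follow immediately: $|E(a)|^{2}>|E^*(a)|^{2}\ge 0$ forces $E(a)\ne0$, so $E$ has no zeros in the open upper half-plane, and since $|E^*(z)|=|\overline{E(\bar z)}|=|E(\bar z)|$ the same inequality gives $|E(z)|>|E(\bar z)|$, in particular $|E(z)|\ge|E(\bar z)|$, whenever $\Imag(z)>0$; hence $E(z)={}_1F_1(\gamma+i\tau;2\gamma+1;iz)\,e^{-iz/2}$ belongs to the Hermite--Biehler class, and setting $\tau=0$ recovers the statement recorded after Lemma~\ref{1f1}. The one delicate point is that the zero-free conclusion genuinely needs the \emph{strict} positivity of $\lim_{\nri}K_n(e^{ia/n},e^{ia/n};\mutil)/K_n(1,1;\mutil)$: the bare inequality $\ge0$ (which holds trivially because the kernel diagonal is a sum of squares) would only yield $|E(a)|\ge|E^*(a)|$ and could not rule out a zero of $E$ in the upper half-plane, so this is exactly where the finiteness of the function $G$ in Lemma~\ref{lambdatil} is essential. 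Apart from that, everything reduces to bookkeeping with Kummer's transformation and complex conjugation, and no further obstacle is anticipated.
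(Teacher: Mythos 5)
Your proposal is correct and follows essentially the same route as the paper: the paper's (very terse) justification is precisely that Lemma \ref{lambdatil} makes the diagonal limit in (\ref{kform0}) strictly positive and finite for all complex $a$, which combined with the Kummer-transformation identity $E^*(z)={}_1F_1(1+\gamma+i\tau;2\gamma+1;iz)e^{-iz/2}$ (the same identity used for $\Theta$ in the proof of Lemma \ref{1f1}) yields $|E(a)|>|E^*(a)|$ in the upper half-plane and hence both Hermite--Biehler properties. You have simply written out in full the computation the paper leaves implicit, including the correct bookkeeping of the factor $e^{\Imag(a)}$ and the observation that strictness (not just nonnegativity) is what rules out zeros.
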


The next technical lemma will also be helpful.

\begin{lemma}\label{or}
For any $a\in\bbC$, any $n\in\bbN$, and any $r>0$,
\[
\left|\frac{\eitheta+e^{ia/n}}{2e^{ia/n}}\right|^{rn}\leq e^{\mco(r)},\qquad\qquad r\rightarrow0,\qquad\theta\in\bbR.
\]
Furthermore, for any compact $K\subseteq\bbC$, the implied constant can be chosen uniformly for $a\in K$, $\theta\in\bbR$, and $n\in\bbN$.
\end{lemma}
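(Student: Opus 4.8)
The plan is a direct computation, reducing everything to an elementary estimate on the modulus of the base. Write $a=\Real(a)+i\Imag(a)$, so that $|e^{ia/n}|=e^{-\Imag(a)/n}$. By the triangle inequality $|\eitheta+e^{ia/n}|\le 1+e^{-\Imag(a)/n}$, and therefore
\[
\left|\frac{\eitheta+e^{ia/n}}{2e^{ia/n}}\right|\le\frac{1+e^{-\Imag(a)/n}}{2e^{-\Imag(a)/n}}=\frac{e^{\Imag(a)/n}+1}{2}.
\]
Note this upper bound is independent of $\theta$, which is precisely why the final constant will be uniform in $\theta$. It remains to raise this to the power $rn$ and control the result; taking logarithms, the claim reduces to an estimate on $rn\log\frac{e^{\Imag(a)/n}+1}{2}$.

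The key elementary fact is that the function $f(x):=\log\frac{e^x+1}{2}$ satisfies $f(0)=0$ and $f'(x)=\frac{e^x}{e^x+1}\in(0,1)$, so by the mean value theorem $|f(x)|\le|x|$ for every $x\in\bbR$. Applying this with $x=\Imag(a)/n$ gives
\[
rn\left|\log\frac{e^{\Imag(a)/n}+1}{2}\right|\le rn\cdot\frac{|\Imag(a)|}{n}=r\,|\Imag(a)|\le r\sup_{a\in K}|\Imag(a)|.
\]
Hence the quantity in the statement is at most $\exp\!\big(r\sup_{a\in K}|\Imag(a)|\big)=e^{\mco(r)}$ as $r\rightarrow0$, with implied constant $\sup_{a\in K}|\Imag(a)|$, which depends only on $K$ and in particular is independent of $n$, $\theta$, and $r$. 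This proves both the asymptotic bound and the asserted uniformity in one stroke.

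There is no real obstacle here. The only point requiring (minor) care is the bound $|f(x)|\le|x|$: it is exactly what converts the troublesome factor of $n$ in the exponent $rn$ into a quantity of size $\mco(r)$, and one should note that it holds globally on $\bbR$ (not just near $0$), so no smallness of $\Imag(a)/n$ needs to be invoked. Everything else is the triangle inequality and the observation that $|\eitheta|=1$ uniformly in $\theta$.
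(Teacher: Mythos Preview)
Your proof is correct. Both arguments are direct elementary estimates on the modulus of the base, but the executions differ slightly. The paper rewrites the base as $\left|\tfrac{1+e^{i\theta}+e^{i\theta}(e^{-ia/n}-1)}{2}\right|$, uses $|1+e^{i\theta}|\le 2$ to bound it by $1+\tfrac{|a|}{2n}\bigl|\tfrac{e^{-ia/n}-1}{-ia/n}\bigr|$, and then appeals to uniform boundedness of $(e^{z}-1)/z$ on compact sets together with $(1+c/n)^{rn}\le e^{cr}$. Your version bypasses that decomposition: a single triangle inequality gives the base $\le\tfrac{e^{\Imag(a)/n}+1}{2}$, and the mean-value bound $|\log\tfrac{e^x+1}{2}|\le|x|$ converts the exponent $rn$ into $r|\Imag(a)|$ directly. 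Your route is arguably cleaner and yields a constant depending only on $\sup_{K}|\Imag(a)|$ rather than $\sup_K|a|$, though this sharpening is immaterial for the application.
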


\begin{proof}
We calculate
\begin{align*}
\left|\frac{\eitheta+e^{ia/n}}{2e^{ia/n}}\right|^{rn}=\left|\frac{1+\eitheta e^{-ia/n}}{2}\right|^{rn}&=\left|\frac{1+\eitheta+\eitheta(e^{-ia/n}-1)}{2}\right|^{rn}\leq\left(1+\frac{|a|}{2n}\cdot\left|\frac{e^{-ia/n}-1}{-ia/n}\right|\right)^{rn}.
\end{align*}
Notice that $(e^{-ia/n}-1)/(-ia/n)$ is uniformly bounded in $n\in\bbN$ and in $a$ in compact subsets $K$ of $\bbC$.  If we denote this upper bound by $C_K$, then we have shown
\[
\left|\frac{\eitheta+e^{ia/n}}{2e^{ia/n}}\right|^{rn}\leq e^{C_Kr|a|/2},\qquad\qquad a\in K, \quad n\in\bbN,\quad\theta\in\bbR
\]
as desired.
\end{proof}

For $\gamma>-1/2$ and $\tau\in\bbR$ fixed, let us retain the definition of $G(a)$ from (\ref{w2}).  Lemma \ref{lambdatil} tells us that $G(a)$ is finite and non-zero for any complex number $a$ and also that $G(a)$ is a continuous function.  This allows us to proceed with the key lemma of this section.

\begin{lemma}\label{cratio}
Let $\mu$ be as in the statement of Theorem \ref{singjump} and let $\mutil$ be as in Section \ref{firstc}.  For any $a\in\bbC$ it holds that
\[
\lim_{\nri}\frac{K_n(e^{ia/n},e^{ia/n};\mutil)}{K_n(e^{ia/n},e^{ia/n};\mu)}=g(0)
\]
and the convergence is uniform for $a$ in compact subsets of $\bbC$.
\end{lemma}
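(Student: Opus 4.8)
The plan is to adapt Lubinsky's localization principle for Christoffel functions to this situation, exploiting the fact that $\mu$ and $g(0)\,\mutil$ agree to leading order near $\theta=0$. First I would establish the easy direction via the monotonicity of Christoffel functions. Since $g$ is continuous at $0$ with $g(0)>0$, for any $\epsilon>0$ there is an arc $A_\epsilon=\{e^{i\theta}:|\theta|<\delta\}$ on which $(1-\epsilon)g(0)\,\mutil\le\mu$ (as measures restricted to $A_\epsilon$, using that the singular part $\mu_s$ and the extra factor $g(\theta)/g(0)$ behave well there, and that $\{0,2\pi\}\cap\supp(\mu_s)=\emptyset$). The trouble is that $\mu$ could be much larger than $\mutil$ away from the arc, so a direct global comparison only gives one inequality in a useful direction after we localize. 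So the real work is a \emph{localization lemma}: showing that $\lambda_n(e^{ia/n};\mu)$ depends, to leading order $n^{-(2\gamma+1)}$, only on the restriction of $\mu$ to an arbitrarily small fixed arc around $1$.

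The key steps, in order: (1) Upper bound for $K_n(e^{ia/n},e^{ia/n};\mu)$, equivalently lower bound for $\lambda_n(e^{ia/n};\mu)$. Given the extremal polynomial $P_n$ of degree $\le n$ with $P_n(e^{ia/n})=1$ achieving $\lambda_n(e^{ia/n};\mutil)$, one wants to use it (or a slight modification) as a test function for $\mu$. One splits $\int|P_n|^2\,d\mu$ into the part over the small arc $A_\delta$, where $\mu\le(1+\epsilon)g(0)\mutil$, giving $\le(1+\epsilon)g(0)\lambda_n(e^{ia/n};\mutil)(1+o(1))$, plus the part over $\partial\bbD\setminus A_\delta$. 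The latter tail must be shown to be $o(n^{-(2\gamma+1)})$; this is where one uses regularity of $\mu$ together with a pointwise bound on $|P_n|$ away from $1$. The needed pointwise bound comes from the explicit form of the model-case reproducing kernel: $P_n(z)=K_n(z,e^{ia/n};\mutil)/K_n(e^{ia/n},e^{ia/n};\mutil)$ is a sum of products $\varphi_m(z)\overline{\varphi_m(e^{ia/n})}$, and using the hypergeometric formulas for $\Phi_n$, $\Phi_n^*$ (and Lemma~\ref{fbound}, Lemma~\ref{or}) one controls $|P_n(e^{i\theta})|$ for $\theta$ bounded away from $0$, showing it decays fast enough that regularity ($\|P\|_\infty^{1/n}/\|P\|_{L^2(\mu)}^{1/n}\to1$, property~(\ref{regprop})) kills the tail contribution. (2) The reverse inequality: lower bound for $K_n(e^{ia/n},e^{ia/n};\mu)$. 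Here one runs the symmetric argument with the roles swapped, now taking the $\mu$-extremal polynomial $Q_n$ with $Q_n(e^{ia/n})=1$ and testing it against $g(0)\mutil$. On $A_\delta$ one has $g(0)\mutil\le(1+\epsilon)\mu/(1-\epsilon)\cdot$ (correcting for the $g$ factor), and the tail over the complementary arc is again controlled — but now the pointwise control on $Q_n$ away from $1$ is not free, since $\mu$ is not explicit. One obtains it from the already-proved upper bound $K_n(z,z;\mu)\le C n^{2\gamma+1}$ (locally uniformly, from step~(1) combined with Lemma~\ref{lambdatil}) via Cauchy--Schwarz: $|Q_n(e^{i\theta})|=|K_n(e^{i\theta},e^{ia/n};\mu)|/K_n(e^{ia/n},e^{ia/n};\mu)\le\sqrt{K_n(e^{i\theta},e^{i\theta};\mu)}/\sqrt{K_n(e^{ia/n},e^{ia/n};\mu)}$, and then a standard Nikolskii/regularity estimate bounds $K_n(e^{i\theta},e^{i\theta};\mu)$ away from $1$ suitably. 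Combining both inequalities and letting $\epsilon\to0$ (then $\delta\to0$) yields $\lim_n K_n(e^{ia/n},e^{ia/n};\mutil)/K_n(e^{ia/n},e^{ia/n};\mu)=1/g(0)\cdot(\text{wait: })\,g(0)$, i.e. the ratio is $g(0)$ as claimed, since $K_n$ scales inversely to the comparison measure's size.

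For the uniformity in $a$ over compact subsets of $\bbC$: all the estimates above are already locally uniform in $a$ — Lemma~\ref{lambdatil} gives uniform convergence of $n^{2\gamma+1}\lambda_n(e^{ia/n};\mutil)$ to the continuous nonvanishing $G(a)$, Lemma~\ref{or} and Lemma~\ref{fbound} supply $a$-uniform pointwise bounds on the model polynomials, and the regularity-based tail estimates do not see $a$ at all. So one tracks the implied constants through and observes they may be taken independent of $a\in K$; since both the sub- and super-extremal comparisons sandwich the ratio between $g(0)(1\pm o(1))$ uniformly, the convergence is uniform on compacta. I expect the main obstacle to be the tail estimate in step~(1) — proving $\int_{\partial\bbD\setminus A_\delta}|P_n|^2\,d\mu = o(n^{-(2\gamma+1)})$: this requires marrying the explicit exponential/polynomial decay of the model-case extremal polynomial $P_n$ off the point $1$ (extracted from the $_2F_1$ representations) with the subexponential growth control that regularity of $\mu$ provides, and getting the two rates to beat each other by the right power of $n$. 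A secondary subtlety is handling the singular part $\mu_s$ and the factor $\gamma<0$ case, where $w$ blows up at $1$: one must be sure the arc $A_\delta$ is chosen inside the region where $\mu$ is still comparable to $\mutil$ and that $\supp(\mu_s)$ stays away from $\theta=0$, which the hypothesis $\{0,2\pi\}\cap\supp(\mu_s)=\emptyset$ guarantees.
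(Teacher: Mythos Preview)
Your overall strategy---localize near $\theta=0$, compare $\mu$ with $g(0)\,\mutil$ on a small arc, and show the tail is negligible---is the right one, and your use of Lemma~\ref{lambdatil} and the uniformity bookkeeping are fine. But the tail estimates in both directions have a genuine gap: you never introduce the \emph{peaking polynomial} that makes the localization work. Regularity (property~(\ref{regprop})) only tells you $\|P\|_{L^\infty(\partial\bbD)}\le(1+\epsilon)^n\|P\|_{L^2}$, which is exponential \emph{growth}; to kill the tail you need something decaying like $t^n$ with $t<1$ to beat it. In the paper's proof (following M\'at\'e--Nevai--Totik) one takes the extremal polynomial $Q_{n-\lfloor rn\rfloor}$ of \emph{reduced} degree and multiplies it by $\bigl((z+e^{ia/n})/(2e^{ia/n})\bigr)^{\lfloor rn\rfloor}$, which is $\le e^{\mco(r)}$ near $1$ (this is exactly what Lemma~\ref{or} says) but is $\mco(t_1^n)$ for some $t_1<1$ on $\{|z-1|>\epsilon\}$. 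The product $t_1^{n}\cdot(1+\epsilon)^{2n}$ can then be made exponentially small, giving a tail of size $\mco(t_2^n)=o(n^{-(2\gamma+1)})$. One then sends $r\to0$ at the end, using the continuity of $G$ from Lemma~\ref{lambdatil} to absorb the degree mismatch.

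Your step~(2) is where this omission bites hardest. For the general $\mu$ you have no explicit formula for the extremal polynomial $Q_n$, and your proposed bound via Cauchy--Schwarz gives $|Q_n(e^{i\theta})|^2\le K_n(e^{i\theta},e^{i\theta};\mu)/K_n(e^{ia/n},e^{ia/n};\mu)$. But step~(1) only controlled $K_n(z,z;\mu)$ for $z$ \emph{near} $1$; for $\theta$ bounded away from $0$ you have nothing better than the regularity bound $K_n(e^{i\theta},e^{i\theta};\mu)\le(1+\epsilon)^{2n}$, so $\int_{\text{tail}}|Q_n|^2\,d\mutil\lesssim (1+\epsilon)^{2n}n^{-(2\gamma+1)}$, which is not $o(n^{-(2\gamma+1)})$. (Even in step~(1), your invocations of Lemma~\ref{fbound} and Lemma~\ref{or} are misplaced: the former bounds $\Phi_m$ only at points $e^{ia/n}$ near $1$, and the latter is precisely the peaking-polynomial estimate you are not using.) The fix is straightforward once you see it: sacrifice $\lfloor rn\rfloor$ degrees of freedom to insert the factor $\bigl((z+e^{ia/n})/(2e^{ia/n})\bigr)^{\lfloor rn\rfloor}$ in both directions, and run your comparison with $Q_{n-\lfloor rn\rfloor}$ instead of $Q_n$.
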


\begin{proof}
We will use ideas from the proof of \cite[Theorem 7]{MNT}.  Fix $\delta\in(0,g(0))$.  Choose $\epsilon>0$ small enough so that the measure $\mu_{\epsilon}$ defined by
\[
\mu_\epsilon=\begin{cases}
\mu & \qquad \mbox{ on } \quad\{z:|z-1|>\epsilon\}\\
(g(0)+\delta)\mutil & \qquad \mbox{ on }\quad \{z:|z-1|\leq\epsilon\}
\end{cases}
\]
satisfies $\mu_\epsilon\geq\mu$.

Fix some $r\in(0,1)$ very small.  For each $m\in\bbN$, define
\[
Q_m(z):=\frac{K_m(z,e^{ia/n};\mutil)}{K_m(e^{ia/n},e^{ia/n};\mutil)},
\]
which is extremal for the problem defining $\lambda_m(e^{ia/n};\mutil)$.  We calculate
\begin{align*}
\lambda_n(e^{ia/n};\mu)&\leq\int |Q_{n-\lfloor rn\rfloor}(z)|^2\left|\frac{z+e^{ia/n}}{2e^{ia/n}}\right|^{2\lfloor rn\rfloor}d\mu(z)\\
&\leq\int_{|z-1|\leq\epsilon} |Q_{n-\lfloor rn\rfloor}(z)|^2\left|\frac{z+e^{ia/n}}{2e^{ia/n}}\right|^{2\lfloor rn\rfloor}d\mu_\epsilon+\int_{|z-1|>\epsilon} |Q_{n-\lfloor rn\rfloor}(z)|^2\left|\frac{z+e^{ia/n}}{2e^{ia/n}}\right|^{2\lfloor rn\rfloor}d\mu_\epsilon.
\end{align*}
The first of these integrals can be bounded above by
\begin{align*}
(g(0)+\delta)e^{\mco(r)}\int_{|z-1|\leq\epsilon} |Q_{n-\lfloor rn\rfloor}(z)|^2d\mutil(z)=(g(0)+\delta)e^{\mco(r)}\lambda_{n-\lfloor rn\rfloor}(e^{ia/n};\mutil),
\end{align*}
where we used Lemma \ref{or} and the big-$\mco$ notation is as $r\rightarrow0$.  Similarly, we find that for some $t_1<t_2<1$,
\begin{align*}
\int_{|z-1|>\epsilon} |Q_{n-\lfloor rn\rfloor}(z)|^2\left|\frac{z+e^{ia/n}}{2e^{ia/n}}\right|^{2\lfloor rn\rfloor}d\mu_\epsilon&\leq\mco(t_1^n)\int_{|z-1|>\epsilon}|Q_{n-\lfloor rn\rfloor}(z)|^2d\mu\\
&\leq\mco(t_1^n)\mu(\partial\bbD)\|Q_{n-\lfloor rn\rfloor}\|^2_{L^{\infty}(\partial\bbD)}\\
&\leq\mco(t_2^n),
\end{align*}
where the big-$\mco$ notation is as $\nri$ and we used the regularity of the measure $\mutil$ (in particular, the property \eqref{regprop}) in the step where we replaced $t_1$ by $t_2$.  These bounds show us that (where $r_n=\lfloor rn\rfloor/n$)
\begin{align*}
&\limsup_{\nri}n^{2\gamma+1}\lambda_n(e^{ia/n};\mu)\\
&\qquad\leq(g(0)+\delta)e^{\mco(r)}\limsup_{\nri}\left(\frac{n}{n-\lfloor rn\rfloor}\right)^{2\gamma+1}(n-\lfloor rn\rfloor)^{2\gamma+1}\lambda_{n-\lfloor rn\rfloor}(e^{ia(1-r_n)/(n-\lfloor rn\rfloor)};\mutil)\\
&\qquad=(g(0)+\delta)e^{\mco(r)}\frac{1}{(1-r)^{2\gamma+1}}G(a(1-r))
\end{align*}
by Lemma \ref{lambdatil}.  Taking the infimum over all $\delta\in(0,g(0))$ and $r\in(0,1)$ and using the continuity of $G$ gives the desired upper bound on the $\limsup$.

To lower bound the $\liminf$, we define the measure
\[
\mutil_{\epsilon}=\begin{cases}
\mutil & \qquad \mbox{ on } \quad\{z:|z-1|>\epsilon\}\\
\frac{1}{g(0)-\delta}\mu & \qquad \mbox{ on }\quad \{z:|z-1|\leq\epsilon\}
\end{cases}
\]
where $\epsilon$ is chosen small enough so that satisfies $\mutil_\epsilon\geq\mutil$.  We can then perform a  calculation similar to that above and, using the regularity of $\mu$, show that
\[
\lambda_n(e^{ia/(n-\lfloor rn\rfloor)};\mutil)\leq \frac{e^{\mco(r)}\lambda_{n-\lfloor rn\rfloor}(e^{ia/(n-\lfloor rn\rfloor)};\mu)}{g(0)-\delta}+\mco(t_3^n)
\]
as $\nri$ for some $t_3<1$.  Repeating the analysis above shows
\[
\liminf_{\nri}(n-\lfloor rn\rfloor)^{2\gamma+1}\lambda_{n-\lfloor rn\rfloor}(e^{ia/(n-\lfloor rn\rfloor)};\mu)\geq(1-r)^{2\gamma+1}e^{\mco(r)}(g(0)-\delta)G\left(\frac{a}{1-r}\right).
\]
Since every $k$ sufficiently large can be written as $n-\lfloor rn\rfloor$ for some $n$ (as was shown in the proof of \cite[Theorem 3.1]{LubBerg}), this shows
\[
\liminf_{k\rightarrow\infty}k^{2\gamma+1}\lambda_{k}(e^{ia/k};\mu)\geq(1-r)^{2\gamma+1}e^{\mco(r)}(g(0)-\delta)G\left(\frac{a}{1-r}\right).
\]
Letting $r$ and $\delta$ tend to $0$ gives the desired conclusion.

The uniformity follows from the uniformity of convergence for the measure $\mutil$ and the uniformity in the implied constants in the big-$\mco$ estimates.
\end{proof}

Notice that the Cauchy-Schwarz inequality implies
\[
\frac{|K_n(e^{ia/n},e^{ib/n};\mu)|}{K_n(1,1;\mu)}\leq\frac{\sqrt{K_n(e^{ia/n},e^{ia/n};\mu)K_n(e^{ib/n},e^{ib/n};\mu)}}{K_n(1,1;\mu)}.
\]
If we combine this observation with Lemma \ref{lambdatil} and Lemma \ref{cratio}, then we arrive at the following corollary.

\begin{corollary}\label{lambdas}
Suppose $\mu$ is as in Theorem \ref{singjump} and $a$ is a complex number.  Then
\begin{align}\label{exist1}
\lim_{\nri}n^{2\gamma+1}\lambda_n(e^{ia/n};\mu)=g(0)G(a)>0.
\end{align}
Furthermore, the collection
\[
\left\{\frac{K_n(e^{ia/n},e^{i\bar{b}/n};\mu)}{K_n(1,1;\mu)}\right\}_{n\in\bbN}
\]
is a normal family in the variables $a$ and $b$.
\end{corollary}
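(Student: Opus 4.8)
The plan is to obtain the Christoffel function asymptotics by combining the three ingredients already assembled: Lemma \ref{lambdatil} (which gives $n^{2\gamma+1}\lambda_n(e^{ia/n};\mutil)\to G(a)$ uniformly on compacta, with $G$ finite and nonzero), Lemma \ref{cratio} (which gives $K_n(e^{ia/n},e^{ia/n};\mutil)/K_n(e^{ia/n},e^{ia/n};\mu)\to g(0)$ uniformly on compacta), and the identity $\lambda_n(z;\mu)=1/K_n(z,z;\mu)$. First I would write
\[
n^{2\gamma+1}\lambda_n(e^{ia/n};\mu)=\frac{n^{2\gamma+1}}{K_n(e^{ia/n},e^{ia/n};\mu)}=n^{2\gamma+1}\lambda_n(e^{ia/n};\mutil)\cdot\frac{K_n(e^{ia/n},e^{ia/n};\mutil)}{K_n(e^{ia/n},e^{ia/n};\mu)}.
\]
By Lemma \ref{lambdatil} the first factor converges to $G(a)$ uniformly on compact subsets of $\bbC$, and by Lemma \ref{cratio} the second factor converges to $g(0)$ uniformly on compact subsets of $\bbC$; hence the product converges to $g(0)G(a)$ uniformly on compacta. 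Since $g(0)>0$ by hypothesis and $G(a)>0$ by Lemma \ref{lambdatil}, the limit is strictly positive, which establishes \eqref{exist1}.

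For the normal family assertion, I would invoke the Cauchy--Schwarz bound displayed just before the corollary,
\[
\frac{|K_n(e^{ia/n},e^{i\bar b/n};\mu)|}{K_n(1,1;\mu)}\leq\frac{\sqrt{K_n(e^{ia/n},e^{ia/n};\mu)\,K_n(e^{ib/n},e^{ib/n};\mu)}}{K_n(1,1;\mu)}.
\]
Writing each diagonal term as $K_n(e^{ia/n},e^{ia/n};\mu)=n^{2\gamma+1}/\big(n^{2\gamma+1}\lambda_n(e^{ia/n};\mu)\big)$, and similarly with $b$, and dividing numerator and denominator by $n^{2\gamma+1}$ (so that $K_n(1,1;\mu)/n^{2\gamma+1}\to g(0)G(0)>0$ by \eqref{exist1} applied at $a=0$, or by Lemma \ref{kn11} combined with Lemma \ref{cratio}), we get
\[
\frac{|K_n(e^{ia/n},e^{i\bar b/n};\mu)|}{K_n(1,1;\mu)}\leq\frac{n^{2\gamma+1}/K_n(1,1;\mu)}{\sqrt{n^{2\gamma+1}\lambda_n(e^{ia/n};\mu)\cdot n^{2\gamma+1}\lambda_n(e^{ib/n};\mu)}}\cdot\big(\text{something}\big),
\]
more cleanly: $|K_n(e^{ia/n},e^{i\bar b/n};\mu)|/K_n(1,1;\mu)\le \sqrt{\lambda_n(1;\mu)^2/(\lambda_n(e^{ia/n};\mu)\lambda_n(e^{ib/n};\mu))}$, and since all three scaled Christoffel functions converge to strictly positive continuous limits uniformly on compacta, this upper bound is uniformly bounded for $a,b$ in any fixed compact set and $n\in\bbN$. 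By Montel's theorem the family is therefore normal in $a$ and $b$.

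I do not expect a serious obstacle here, since all the analytic work was done in Lemmas \ref{kn11}, \ref{lambdatil}, and \ref{cratio}; the only point requiring a little care is making sure the denominator $K_n(1,1;\mu)$ is comparable to $n^{2\gamma+1}$, which follows by applying the already-proved asymptotic \eqref{exist1} at $a=0$ (note $e^{i\cdot 0/n}=1$), giving $K_n(1,1;\mu)/n^{2\gamma+1}\to 1/(g(0)G(0))>0$. The mild circularity worry — using \eqref{exist1} to prove the second half of the same corollary — is harmless since \eqref{exist1} is fully established in the first paragraph before the normal-family claim is addressed.
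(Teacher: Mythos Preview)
Your proposal is correct and follows essentially the same route as the paper: the paper derives the corollary directly from Lemma~\ref{lambdatil}, Lemma~\ref{cratio}, and the Cauchy--Schwarz inequality displayed immediately before the statement, and you have simply spelled out those two steps (the factorization $n^{2\gamma+1}\lambda_n(e^{ia/n};\mu)=n^{2\gamma+1}\lambda_n(e^{ia/n};\mutil)\cdot K_n(\cdot;\mutil)/K_n(\cdot;\mu)$ for \eqref{exist1}, then the Cauchy--Schwarz bound combined with \eqref{exist1} at $a=0$ for normality). The only cosmetic wrinkle is the momentary ``something'' placeholder in the middle of the normal-family paragraph, which you immediately replace with the clean inequality; trim that and the argument is complete.
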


\noindent\textit{Remark.}  If $\gamma=a=0$, then equation (\ref{exist1}) is reminiscent of a recent result established by Danka in \cite{Danka} (see also \cite{NevTo}).  If $\tau=a=0$, then this is reminiscent of \cite[Theorem 1.1]{DanTo}.  The results in \cite{Danka,DanTo} apply to more general sets than the circle.

\medskip



\begin{proof}[Proof of Theorem \ref{singjump}]  Our proof depends on \cite[Theorem 3.10]{Bourgade}, which tells us that if we verify some technical conditions, then we can make a conclusion about the similarity of the asymptotics of $K_n(z,w;\mu)$ to those of $K_n(z,w;\mutil)$.  The first condition that we must verify is that of \textit{mutual regularity}, which means we must verify that as $\nri$
\[
\sup_{\deg(P)\leq n}\left(\frac{\int |P|^2d\mu}{\int|P|^2d\mutil}\right)^{1/n}\rightarrow1,\qquad\qquad\sup_{\deg(P)\leq n}\left(\frac{\int |P|^2d\mutil}{\int|P|^2d\mu}\right)^{1/n}\rightarrow1.
\]
This is a simple consequence of the regularity of $\mu$ and $\mutil$ because if $\epsilon>0$ is fixed, $n$ is sufficiently large, and $\deg(P)\leq n$, then
\[
\|P\|^2_{L^2(\mu)}\leq\mu(\partial\bbD)\|P^2\|_{L^{\infty}(\partial\bbD)}\leq\mu(\partial\bbD)(1+\epsilon)^n\|P\|^2_{L^2(\mutil)}
\]
and a similar string of inequalities holds if we switch the roles of $\mu$ and $\mutil$.

The second technical condition we must verify follows from Lemma \ref{lambdatil}, which shows that
\[
\lim_{r\rightarrow0^+}\limsup_{\nri}\frac{K_n(e^{ia/n},e^{ia/n};\mutil)}{K_{n-\lfloor rn\rfloor}(e^{ia/n},e^{ia/n};\mutil)}=1
\]
uniformly for $a$ in compact subsets of $\bbR$.  Therefore, we may apply the conclusion of \cite[Theorem 3.10]{Bourgade} and conclude that if $a$ and $b$ are real, then

\begin{align*}
\lim_{\nri}\frac{g(0)K_n(e^{ia/n},e^{ib/n};\mu)-K_n(e^{ia/n},e^{ib/n};\mutil)}{K_n(e^{ia/n},e^{ia/n};\mu)}=0.
\end{align*}
By Corollary \ref{lambdas}, we can rewrite this as
\begin{align*}
\lim_{\nri}\frac{g(0)K_n(e^{ia/n},e^{ib/n};\mu)-K_n(e^{ia/n},e^{ib/n};\mutil)}{K_n(1,1;\mu)}=0.
\end{align*}
We can now apply Lemma \ref{cratio} to obtain the desired conclusion when $a$ and $b$ are real numbers.

To obtain the general conclusion, we apply Corollary \ref{lambdas}, which tells us that
\[
\left\{\frac{K_n(e^{ia/n},e^{i\bar{b}/n};\mu)}{K_n(1,1;\mu)}\right\}_{n\in\bbN}
\]
is a normal family in the complex variables $a$ and $b$.  Since the limiting kernel in the model case is analytic in $a$ and $\bar{b}$ and we have already verified convergence when $a$ and $b$ are real, we have the desired uniform convergence in (\ref{kform0}) on compact subsets of $\bbC\times\bbC$.
\end{proof}


\section{Proof of Theorem \ref{bergl}}\label{bergproof}

This section contains a proof of Theorem \ref{bergl}.
Since we have already remarked that the case $m=1$ in Theorem \ref{bergl} reduces to a result that is already known, we will assume that $m\geq2$.  The procedure we follow is motivated by the approach in \cite{LubBerg} and requires some technical details presented in \cite{Islands,ToTrans}.  

\subsection{A Model Case}\label{first}

As in \cite{LubBerg}, we begin by proving Theorem \ref{bergl} in one specific case.  We will consider area measure on the region $G_{r,m}$, which we denote by $\mu_0$.  In this case, there is an exact formula for the monic orthogonal polynomials when the degree is sufficiently high.  In order to avoid this degree technicality, we require the following lemma, which follows from \cite[Theorem 1.4]{ToTrans}.

\begin{lemma}\label{toinf}
If $z_0$ is as in the statement of Theorem \ref{bergl}, then
\[
\lim_{\nri}K_n(z_0,z_0;\mu_0)=\infty.
\]
\end{lemma}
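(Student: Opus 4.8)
\textbf{Proof plan for Lemma \ref{toinf}.}

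The plan is to exploit the monotonicity of the diagonal reproducing kernel together with a comparison to a measure on the full lemniscate boundary $\partial G_{r,m}$, for which the growth of $K_n(z_0,z_0;\cdot)$ is already understood via \cite{Islands,ToTrans}. More precisely, since $r<1$, the set $\overline{G}_{r,m}$ is a union of $m$ disjoint ``islands'' (closed Jordan domains), and the point $z_0$ lies on the boundary of one of them. First I would recall that for area measure $\mu_0$ on $G_{r,m}$ we have $\mu_0 \leq C\,\nu$ for some finite $C$, where $\nu$ is, say, arclength measure on $\partial G_{r,m}$ restricted to a neighbourhood of $z_0$ plus a harmless background measure making $\nu$ a regular measure on $\overline{G}_{r,m}$; by the monotonicity property stated in the excerpt ($\mu_0 \leq C\nu \Rightarrow K_n(z_0,z_0;\mu_0) \geq C^{-1} K_n(z_0,z_0;\nu)$) it suffices to prove $K_n(z_0,z_0;\nu) \to \infty$. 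Actually the cleaner route is the reverse: since $\mu_0$ is a regular measure on $\overline{G}_{r,m}$ and $z_0 \in \partial G_{r,m}$, I expect $K_n(z_0,z_0;\mu_0)$ to grow like a positive power of $n$ (area-type weights on a two-dimensional region produce $K_n(z_0,z_0)$ of order $n^2$ in the interior and of order $n$ near a smooth boundary point), so the statement should follow from a lower bound showing this diagonal is unbounded.

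The key steps, in order, would be: (1) Invoke \cite[Theorem 1.4]{ToTrans} to obtain, for $n$ sufficiently large, an explicit (or asymptotic) description of the monic orthogonal polynomials $\Phi_n(\cdot;\mu_0)$ and their leading coefficients $\kappa_n$, or at least enough information to estimate $K_n(z_0,z_0;\mu_0) = \sum_{j=0}^n |\varphi_j(z_0)|^2$ from below. (2) Use the regularity of $\mu_0$ (established because area measure on a nice region is regular, or cited from \cite{Islands}) to control $\kappa_n^{1/n}$ and hence the normalization of $\varphi_j$. (3) Combine with the known transfinite-diameter / capacity data for the lemniscate $\{|z^m-1| = r^m\}$ — whose capacity and equilibrium measure are classical — to show that the terms $|\varphi_j(z_0)|^2$ do not decay, in fact are bounded below away from zero for a positive proportion of $j$, forcing the sum to diverge. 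Since $z_0 \in \partial G_{r,m}$ lies on the outer boundary where the equilibrium measure has a density bounded below, each $|\varphi_j(z_0)|$ should be comparable to a constant (times possibly a slowly varying factor), and summing over $j = 0, \ldots, n$ gives divergence.

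Alternatively, and perhaps more robustly, I would avoid the explicit polynomial formulas and argue by contradiction: if $K_n(z_0,z_0;\mu_0)$ stayed bounded, then $\lambda_n(z_0;\mu_0) = 1/K_n(z_0,z_0;\mu_0)$ would stay bounded below by a positive constant, meaning there would be polynomials $P_n$ of degree $\leq n$ with $P_n(z_0)=1$ and $\|P_n\|_{L^2(\mu_0)}^2$ bounded. But $z_0$ is a boundary point of a two-dimensional region on which $\mu_0$ is two-dimensional Lebesgue (area) measure, and a normal-families / submean-value-property argument shows that a polynomial normalized to equal $1$ at a boundary point of such a region must have $L^2(\text{area})$ norm bounded below by a constant times the area of a small region where $|P_n|$ is close to $1$ — and as $\deg P_n \to \infty$ one can, using that the equilibrium potential of $\overline{G}_{r,m}$ vanishes on $\partial G_{r,m}$ (regularity of the domain for the Dirichlet problem), find that this forces $\lambda_n(z_0;\mu_0) \to 0$, a contradiction. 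The main obstacle I anticipate is step (1)/(2): pinning down exactly what \cite[Theorem 1.4]{ToTrans} gives for these disconnected lemniscates and translating it into a clean lower bound on the diagonal kernel uniformly in the degree, since the ``degree technicality'' flagged in the text (the exact formula only holding for large $n$) must be handled carefully, and the disconnectedness of $\overline{G}_{r,m}$ means the associated potential-theoretic objects (equilibrium measure, Green's function) are genuinely more delicate than in the $m=1$ disk case treated in \cite{LubBerg}.
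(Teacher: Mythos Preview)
Your proposal is far more elaborate than needed, and in places incorrect. The paper's proof of this lemma is literally one line: it cites \cite[Theorem 1.4]{ToTrans}, which gives the precise Christoffel function asymptotic
\[
\lim_{n\to\infty} n^{2}\lambda_n(z_0;\mu_0)=2\pi\mu_0'(z_0)\left(\frac{|z_0|^{m-1}}{r^m}\right)^{-2}\in(0,\infty),
\]
so that $K_n(z_0,z_0;\mu_0)=1/\lambda_n(z_0;\mu_0)$ grows like a constant times $n^2$ and in particular diverges. You actually name this reference in your step (1), but you misread what it provides: Totik's theorem is not about the monic polynomials or their leading coefficients, it is a direct asymptotic for $\lambda_n$ at boundary points of fairly general curves and domains, and it applies to the disconnected lemniscate $G_{r,m}$ without further work. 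No explicit polynomial formulas from \cite{Islands}, no contradiction argument, and no potential-theoretic estimates are required.

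There is also a genuine error in your first paragraph: the inequality $\mu_0\leq C\nu$ with $\nu$ built from arclength on $\partial G_{r,m}$ cannot hold for any finite $C$, because area measure on $G_{r,m}$ and arclength on $\partial G_{r,m}$ are mutually singular. Adding a ``harmless background measure'' does not fix this unless the background already dominates $\mu_0$, which makes the comparison vacuous. Your alternative contradiction sketch (if $\lambda_n(z_0;\mu_0)$ stayed bounded below, a normal-families argument on area integrals gives a contradiction) is in spirit how one might prove a weak form of Totik's result from scratch, but the details you give are too vague to constitute a proof, and in any event the full asymptotic is already available in the literature and is exactly what the paper invokes.
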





Lemma \ref{toinf} tells us that when we look at the limits of kernels, we can ignore any finite collection of terms in the numerator of $K_n(z_0+a/n,z_0+b/n;\mu_0)/K_n(z_0,z_0;\mu_0)$ without changing the limit.  Therefore, we will be able to ignore all of the polynomials to which the next proposition does not apply.

Given $r\in(0,1)$ and $m\in\bbN$ and $q>0$, let $\gamma_{r,m}^{(q)}$ be the measure defined on the unit circle by
\[
d\gamma_{r,m}^{(q)}(w)=\frac{|dw|}{|r^mw+1|^q}.
\]
Notice that this measure can be written as $|f(w)|^2|dw|$, where $f$ is analytic in a neighborhood of the closed unit disk.  The following result is Proposition 7.2 in \cite{Islands}.

\begin{prop}\label{fromislands}
With $v=2-\frac{2}{m}-\frac{2s}{m}$, the following formulas are valid for all sufficiently large values of $k$:
\begin{align*}
&\Phi_{km+m-1}(z;\mu_0)=z^{m-1}(z^m-1)^k,\\
&\Phi_{km+s}(z;\mu_0)=\frac{z^sr^{m(k+1)}}{z^m-1+r^{2m}}\left(\Phi_{k+1}\left(w;\gamma_{r,m}^{(v)}\right)-\frac{\Phi_{k+1}\left(-r^m;\gamma_{r,m}^{(v)}\right)}{\Phi_{k}\left(-r^m;\gamma_{r,m}^{(v)}\right)}\Phi_{k}\left(w;\gamma_{r,m}^{(v)}\right)\right),
\end{align*}
where $w=\frac{z^m-1}{r^m}$ and $s=0,1,2,\ldots,m-2$.
\end{prop}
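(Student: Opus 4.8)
The plan is to use the $m$-fold rotational symmetry of area measure on $G_{r,m}$ to reduce the proposition to a statement about weighted Bergman polynomials on a disk, and then to pass from the disk to the unit circle by a Green's-theorem argument, which is where the polynomials $\Phi_j(\cdot;\gamma_{r,m}^{(v)})$ enter.

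Since $\mu_0$ is invariant under $z\mapsto e^{2\pi i/m}z$, the subspaces of polynomials whose degrees lie in a fixed residue class modulo $m$ are mutually orthogonal in $L^2(\mu_0)$. Hence the monic orthogonal polynomial of degree $km+s$, $0\le s\le m-1$, has the form $\Phi_{km+s}(z;\mu_0)=z^s\psi_k(z^m)$ with $\psi_k$ monic of degree $k$, and its orthogonality against all lower-degree polynomials reduces to orthogonality against $z^{\ell m+s}$, $0\le\ell<k$. The substitution $u=z^m$ (an $m$-to-one map of $G_{r,m}$ onto the disk $D(1,r^m)$ of radius $r^m$ about $1$, whose Jacobian contributes a factor $|u|^{2/m-2}$) turns this into the assertion that $\psi_k$ is the monic degree-$k$ orthogonal polynomial for the weight $|u|^{2(s+1)/m-2}=|u|^{-v}$ on $D(1,r^m)$. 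When $s=m-1$ the weight is $1$, and since the monic Bergman polynomial of degree $k$ on any disk $D(c,\rho)$ is $(u-c)^k$, we get $\Phi_{km+m-1}(z;\mu_0)=z^{m-1}(z^m-1)^k$. For $0\le s\le m-2$, translating $u=1+r^m w$ shows that $r^{-mk}\psi_k(1+r^m w)$ is the monic degree-$k$ orthogonal polynomial on $\mathbb{D}$ for the weight $|1+r^m w|^{-v}$.

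The heart of the matter is to identify this weighted Bergman polynomial with the combination appearing in the statement. Because $r<1$, the weight factors as $|1+r^m w|^{-v}=|f(w)|^2$ with $f(w)=(1+r^m w)^{-v/2}$ analytic and non-vanishing on a neighborhood of $\overline{\mathbb{D}}$; its only branch point, $-1/r^m$, lies outside $\overline{\mathbb{D}}$ and reflects across $\partial\mathbb{D}$ to the point $-r^m\in\mathbb{D}$. Writing $\int_{\mathbb{D}}\psi_k(1+r^m w)\,\overline{w^\ell}\,|f|^2\,dA$ as $\int_{\mathbb{D}}\bigl[\psi_k(1+r^m w)f(w)\bigr]\,\overline{w^\ell f(w)}\,dA$ and integrating by parts (the complex form of Green's theorem) converts each area integral into a contour integral over $\partial\mathbb{D}$; expressing the boundary values of $\overline{f}$ through the reflected function $w\mapsto f(1/w)$ (whose branch point is at $-r^m$) and re-inserting $|f|^2=f\overline{f}$ turns these into Szeg\H{o}-type inner products against $\gamma_{r,m}^{(v)}$. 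Pushing the bookkeeping through — and using the standard identities satisfied by the polynomials $\Phi_j(\cdot;\gamma_{r,m}^{(v)})$ — one finds that the orthogonality of $\psi_k(1+r^m w)$ to $1,w,\dots,w^{k-1}$ is equivalent to $(w+r^m)\,\psi_k(1+r^m w)$ being a scalar multiple of $\Phi_{k+1}(w;\gamma_{r,m}^{(v)})-c_k\Phi_k(w;\gamma_{r,m}^{(v)})$, the latter being characterized as the unique monic degree-$(k+1)$ element of $\operatorname{span}\{\Phi_k(w;\gamma_{r,m}^{(v)}),\Phi_{k+1}(w;\gamma_{r,m}^{(v)})\}$ vanishing at $-r^m$; this forces $c_k=\Phi_{k+1}(-r^m;\gamma_{r,m}^{(v)})/\Phi_k(-r^m;\gamma_{r,m}^{(v)})$, and the vanishing at $-r^m$ is itself forced by the polynomiality of $\psi_k$. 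Dividing by $w+r^m$, comparing leading coefficients, and rewriting in $z$ via $w=(z^m-1)/r^m$ (so $w+r^m=(z^m-1+r^{2m})/r^m$) then yields the prefactor $r^{m(k+1)}/(z^m-1+r^{2m})$ and the second formula.

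I expect the main obstacle to be precisely this identification: carrying out the Green's-theorem boundary bookkeeping cleanly and, more substantively, extracting the combination $\Phi_{k+1}(\cdot;\gamma_{r,m}^{(v)})-c_k\Phi_k(\cdot;\gamma_{r,m}^{(v)})$ over $(w+r^m)$ rather than some other combination of the $\Phi_j$ and their star-polynomials $\Phi_j^*$ — this is where the specific analytic structure of $\gamma_{r,m}^{(v)}$ (its Szeg\H{o} function is essentially $f$, so the relevant reflected singularity sits at $-r^m$) does the work. A more computational alternative is to take a closed form for the $\Phi_j(\cdot;\gamma_{r,m}^{(v)})$ (the weight is of generalized-Jacobi type, with ${}_2F_1$-type orthogonal polynomials as in Section \ref{firstc}), substitute the claimed $\Phi_{km+s}(z;\mu_0)$ into $\int_{G_{r,m}}\Phi_{km+s}(z;\mu_0)\,\overline{z^j}\,d\mu_0(z)$, and verify the vanishing term by term. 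Finally, the qualifier ``for all sufficiently large $k$'' absorbs both the finitely many low degrees at which the above normalization is exceptional and the requirement $\Phi_k(-r^m;\gamma_{r,m}^{(v)})\ne0$, which holds for large $k$ because the zeros of these orthogonal polynomials cluster on $\partial\mathbb{D}$.
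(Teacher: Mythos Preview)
The paper does not prove this proposition at all; it is quoted as Proposition~7.2 of \cite{Islands}, so there is no in-paper proof to compare against. Your outline is, however, faithful to the argument in \cite{Islands}: the $m$-fold rotational invariance of $\mu_0$ forces $\Phi_{km+s}(z;\mu_0)=z^s\psi_k(z^m)$, the substitution $u=z^m$ (an $m$-to-one cover of $D(1,r^m)$ since $r<1$) turns $\psi_k$ into the monic degree-$k$ orthogonal polynomial for the area weight $|u|^{-v}$ on that disk, and after the affine change $u=1+r^m w$ one must identify the resulting weighted Bergman polynomial on $\mathbb{D}$ with the displayed combination of Szeg\H{o} polynomials for $\gamma_{r,m}^{(v)}$. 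Your computation of the exponent $v=2-\tfrac{2}{m}-\tfrac{2s}{m}$ and the $s=m-1$ case are correct.

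For $0\le s\le m-2$ you have correctly located the crux --- passing from the area inner product to the arc-length inner product on $\partial\mathbb{D}$ via Green's theorem, using that $(1+r^m w)^{-v/2}$ is analytic on $\overline{\mathbb{D}}$ and that its reflection has its singularity at $-r^m$ --- and you are right that this is what forces the factor $w+r^m$ and hence the specific constant $c_k=\Phi_{k+1}(-r^m;\gamma_{r,m}^{(v)})/\Phi_k(-r^m;\gamma_{r,m}^{(v)})$. But the phrase ``pushing the bookkeeping through --- and using the standard identities'' is exactly where the work lies: in \cite{Islands} this identification occupies several pages, and involves not just Green's theorem but a careful matching of the analytic and anti-analytic pieces after reflection, together with the Christoffel--Darboux/recurrence structure of the $\Phi_j(\cdot;\gamma_{r,m}^{(v)})$. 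Your proposal is a correct high-level plan that matches the source, but the central step is asserted rather than executed; as written it is an accurate roadmap, not yet a proof.
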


Proposition \ref{fromislands} tells us that in order to calculate the asymptotics of the orthogonal polynomials for $\mu_0$, we will need to know the asymptotics of the orthogonal polynomials for $\gamma_{r,m}^{(2-\frac{2}{m}-\frac{2s}{m})}$ when $s=0,1,\ldots,m-2$.  Fortunately, the analyticity of the measure $\gamma_{r,m}^{(q)}$ for all values of $q>0$ permits such a precise description, which was given in \cite{MFMS}.  Define
\[
D(z;\gamma_{r,m}^{(q)}):=\exp\left(\frac{-q}{4\pi}\int_0^{2\pi}\log|r^m\eitheta+1|\frac{\eitheta+z}{\eitheta-z}d\theta\right),
\]
which can be analytically continued to $\{z:|z|>r^m\}$.   In fact, we can evaluate the integral using \cite[Theorem 17.17]{Rudin} and calculate
\begin{align}\label{dexact}
D(z;\gamma_{r,m}^{(q)})=\left(1+\frac{r^m}{z}\right)^{q/2},
\end{align}
where the branch cut is taken so that $D(\infty;\gamma_{r,m}^{(q)})=1$ (see also \cite[Equation 8.38]{Islands}).  The following result is part of \cite[Proposition 1]{MFMS}:

\begin{prop}\label{expconv}
Uniformly on some neighborhood of the unit circle it holds that
\[
\lim_{k\rightarrow\infty}\frac{\Phi_{k}(w;\gamma_{r,m}^{(q)})}{w^{k}}=D(w;\gamma_{r,m}^{(q)}).
\]
\end{prop}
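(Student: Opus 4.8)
Since Proposition~\ref{expconv} is stated as a consequence of \cite[Proposition 1]{MFMS}, the plan is to explain why that result applies here and to record the short computation identifying the limit; the statement is the classical strong (Szeg\H{o}) asymptotics for orthonormal polynomials on the unit circle whose orthogonality weight is \emph{analytic} and nonvanishing on a neighborhood of $\partial\bbD$. The first point I would make is that $\gamma_{r,m}^{(q)}$ has such a weight: since $r\in(0,1)$ we have $|r^mz+1|\geq1-r^m>0$ for every $z$ in a neighborhood of $\overline{\bbD}$, so $|r^m\eitheta+1|^{-q}$ is the boundary restriction of a function that is analytic and nonvanishing on an annulus $\{\rho^{-1}<|z|<\rho\}$ for some $\rho>1$. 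Hence $\gamma_{r,m}^{(q)}$ lies in the Szeg\H{o} class in the strongest possible sense and the hypotheses of \cite{MFMS} are met.

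I would carry out the argument in two steps. First, invoke the strong asymptotics for such a weight: writing $\varphi_k^*(z)=z^k\overline{\varphi_k(1/\bar z)}$ for the reversed orthonormal polynomials, one has $\varphi_k^*(z;\gamma_{r,m}^{(q)})\to D_i(z)^{-1}$ and $\kappa_k(\gamma_{r,m}^{(q)})\to D_i(0)^{-1}$ as $\kri$, uniformly on compact subsets of $\{|z|<\rho\}$, where $D_i$ is the interior Szeg\H{o} function of $\gamma_{r,m}^{(q)}$; consequently $\Phi_k^*(z)=\varphi_k^*(z)/\kappa_k\to D_i(0)/D_i(z)$ uniformly there. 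Using the identity $\Phi_k(w)=w^k\overline{\Phi_k^*(1/\bar w)}$: for $w$ in a neighborhood of $\{|w|\geq1\}$ in the Riemann sphere (including $w=\infty$, where $1/\bar w=0$) the point $1/\bar w$ lies in $\{|z|<\rho\}$, so
\[
\frac{\Phi_k(w;\gamma_{r,m}^{(q)})}{w^k}=\overline{\Phi_k^*(1/\bar w;\gamma_{r,m}^{(q)})}\longrightarrow\frac{D_i(0)}{\overline{D_i(1/\bar w)}}
\]
uniformly. Second, identify the limit. By the Herglotz--Schwarz representation of the outer function, $D_i$ is, up to a positive multiplicative constant $C$, the interior branch of the very integral defining $D(z;\gamma_{r,m}^{(q)})$, so the evaluation of that integral carried out through \cite[Theorem 17.17]{Rudin} gives $D_i(z)=C(1+r^mz)^{-q/2}$. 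In particular $D_i$ has real Taylor coefficients, so $\overline{D_i(1/\bar w)}=D_i(1/w)=C(1+r^m/w)^{-q/2}$, whence $D_i(0)/\overline{D_i(1/\bar w)}=(1+r^m/w)^{q/2}=D(w;\gamma_{r,m}^{(q)})$ by \eqref{dexact}.

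The step that carries the real weight is the uniform convergence of $\varphi_k^*$ on a full neighborhood of $\overline{\bbD}$ rather than merely on compact subsets of the open disk: for a general Szeg\H{o}-class weight one has only the latter, and it is the analyticity of the weight that upgrades this to convergence on a neighborhood of $\partial\bbD$ with geometric rate. This is exactly the content of \cite{MFMS} (obtained there by the methods developed in that paper; it also follows from classical arguments exploiting the analytic continuation of the weight across $\partial\bbD$ and the resulting exponential decay of the relevant Taylor coefficients), so in practice one simply quotes it. Everything else --- the passage to the exterior variable via $\Phi_k(w)=w^k\overline{\Phi_k^*(1/\bar w)}$ and the identification of the Szeg\H{o} function with the explicitly computed $D(\cdot;\gamma_{r,m}^{(q)})$ --- is routine, so I expect this enlargement of the region of convergence to be the only genuine obstacle.
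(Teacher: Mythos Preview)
Your proposal is correct and matches the paper's approach: the paper gives no proof at all, simply stating that the proposition ``is part of \cite[Proposition 1]{MFMS}.'' What you have written is a careful unpacking of that citation --- verifying that the weight of $\gamma_{r,m}^{(q)}$ is analytic and nonvanishing on a neighborhood of $\overline{\bbD}$ so that \cite{MFMS} applies, and then identifying the limiting Szeg\H{o} function with the explicit $D(\cdot;\gamma_{r,m}^{(q)})$ already computed in \eqref{dexact} --- which is exactly the content one would supply if asked to justify the citation.
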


Now that we have a precise understanding of the monic orthogonal polynomials for $\mu_0$, we need to understand the behavior of the leading coefficients $\kappa_n(\mu_0)$.  The following result is contained in the proof of \cite[Proposition 7.1]{Islands}.

\begin{prop}\label{kappone}
For every $k\in\bbN$, it holds that
\[
\kappa_{km+m-1}(\mu_0)^2=\frac{mk+m}{\pi r^{2mk+2m}}.
\]
If $s=0,\ldots,m-2$, then with $v=2-\frac{2}{m}-\frac{2s}{m}$, the following asymptotic formula holds as $\kri$:
\[
\kappa_{km+s}(\mu_0)^2=\left(\frac{2\Phi_{k}\left(-r^m;\gamma_{r,m}^{(v)}\right)(mk+1+s)}{-\Phi_{k+1}\left(-r^m;\gamma_{r,m}^{(v)}\right)r^{2km+m}}\right)\left(\frac{1}{2\pi}+\mco(\eta^{2k})\right),
\]
where $\eta$ is any real number in $(r^m,1)$.  
\end{prop}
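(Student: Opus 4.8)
The plan is to compute $\kappa_n(\mu_0)^2=1/\|\Phi_n(\cdot;\mu_0)\|_{L^2(\mu_0)}^2$ by evaluating $\|\Phi_n(\cdot;\mu_0)\|_{L^2(\mu_0)}^2=\int\Phi_n(z;\mu_0)\,\overline{z^{\,n}}\,d\mu_0(z)$ (the equality holding because $\Phi_n(\cdot;\mu_0)$ is orthogonal to all polynomials of lower degree), using the explicit formulas of Proposition~\ref{fromislands}.  The structural point is that $\mu_0$ is invariant under $z\mapsto e^{2\pi i/m}z$ (both $G_{r,m}$ and planar Lebesgue measure are), so the monic orthogonal polynomial of degree $km+s$ is forced to be of the form $\Phi_{km+s}(z;\mu_0)=z^sS_k(z^m)$ with $S_k$ monic of degree $k$.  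Since $r<1$, the region $G_{r,m}$ has exactly $m$ connected components, each mapped biholomorphically onto $B:=\{\zeta:|\zeta-1|<r^m\}$ by $z\mapsto z^m$, with $dA(\zeta)=m^2|z|^{2(m-1)}\,dA(z)$; because $|z|^m=|\zeta|$ and $v=\tfrac{2m-2-2s}{m}$, the factor $|z|^{2s}$ from $|z^s|^2$ combines with this Jacobian to produce exactly the weight $|\zeta|^{-v}$ after summing over components, giving $\|z^sQ(z^m)\|_{L^2(\mu_0)}^2=\tfrac1m\int_B|Q(\zeta)|^2|\zeta|^{-v}\,dA(\zeta)$ for every polynomial $Q$.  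A short extremal argument then shows $S_k$ is the monic degree-$k$ orthogonal polynomial of $d\sigma:=|\zeta|^{-v}\,dA|_B$, so that after rescaling $B$ to $\bbD$ the problem reduces to the leading coefficients of the weighted Bergman polynomials of $\omega:=|r^mw+1|^{-v}\,dA|_{\bbD}$.  When $s=m-1$ one has $v=0$ and $S_k(\zeta)=(\zeta-1)^k$, so $\|\Phi_{km+m-1}(\cdot;\mu_0)\|_{L^2(\mu_0)}^2=\tfrac1m\int_B|\zeta-1|^{2k}\,dA(\zeta)=\tfrac{\pi r^{2mk+2m}}{mk+m}$ by an elementary computation in polar coordinates about $1$, which is the first formula exactly.

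For $s\in\{0,1,\dots,m-2\}$ one has $0<v<2$, and combining $\Phi_{km+s}(z;\mu_0)=z^sS_k(z^m)$ with Proposition~\ref{fromislands} and the identity $z^m-1+r^{2m}=r^m(w+r^m)$ gives the ``Bergman-to-Szeg\H{o}'' formula $\Phi_k(w;\omega)=Q_{k+1}(w)-c_kQ_k(w)$, where $Q_j(w):=\bigl(\Phi_j(w;\gamma_{r,m}^{(v)})-\Phi_j(-r^m;\gamma_{r,m}^{(v)})\bigr)/(w+r^m)$ is a polynomial of degree $j-1$ and $c_k:=\Phi_{k+1}(-r^m;\gamma_{r,m}^{(v)})/\Phi_k(-r^m;\gamma_{r,m}^{(v)})$ (the singular parts at $-r^m$, and likewise the constants $\Phi_j(-r^m;\gamma_{r,m}^{(v)})$, cancelling in the combination $Q_{k+1}-c_kQ_k$ by the choice of $c_k$).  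I would then evaluate $\|\Phi_k(\cdot;\omega)\|_\omega^2=\int_{\bbD}\Phi_k(w;\omega)\,\overline{w^k}\,|r^mw+1|^{-v}\,dA(w)$ by inserting the strong asymptotics $\Phi_j(w;\gamma_{r,m}^{(v)})=w^jD(w;\gamma_{r,m}^{(v)})\bigl(1+\mco(\eta^{\,j})\bigr)$ of Proposition~\ref{expconv} (uniform on $\eta\le|w|\le1$, any $\eta\in(r^m,1)$), with $D$ as in~\eqref{dexact}; this replaces $\Phi_k(w;\omega)$ near $|w|=1$ by the profile $w^k\,D(w;\gamma_{r,m}^{(v)})(w-c_k)/(w+r^m)$, the factor $|w|^{2k}$ concentrates the integral near $|w|=1$, and a Laplace-type evaluation of the radial variable should produce the stated value together with the error $\mco(\eta^{2k})$.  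The boundary values $\Phi_{k\pm1}(-r^m;\gamma_{r,m}^{(v)})$ survive in the answer precisely because $-r^m$ lies on the circle $|w|=r^m$ on which the asymptotics of Proposition~\ref{expconv} break down, so they cannot be resolved further.

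The hard part is this last asymptotic step, and I anticipate two delicate issues.  First, obtaining the \emph{sharp} exponential rate $\mco(\eta^{2k})$ rather than the naive $\mco(\eta^{k})$: this requires controlling the interior region $|w|<\eta$ by the maximum principle (the polynomial $\Phi_k(\cdot;\omega)$ is $\mco(\eta^k)$ on $\{|w|=\eta\}$, hence on $\{|w|\le\eta\}$), together with the fact that the error term in Proposition~\ref{expconv} for an analytic weight is itself governed by the factor $(r^m)^j$, so that pairing it against $\overline{w^k}$ gains a further factor of this size and the two contributions combine to give the quadratic rate.  Second, handling the contributions near $w=0$ and near $w=-r^m$, where the limiting profile has singularities of types $w^{-v/2}$ and $(w+r^m)^{v/2-1}$ respectively — both $L^2(dA)$-integrable exactly because $0<v<2$ — the one at $0$ being what produces the shift of the denominator of the main term away from $k+1$.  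The boundedness inputs needed along the way (uniform boundedness of $c_k$ and of $\sup_{\overline{\bbD}}|\Phi_j(\cdot;\gamma_{r,m}^{(v)})|$) follow from the standard theory of orthogonal polynomials on the unit circle for measures with analytic weight.
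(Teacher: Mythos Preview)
The paper does not prove this proposition itself; it cites the proof of \cite[Proposition~7.1]{Islands}. Your derivation of the first formula ($s=m-1$) is correct and is essentially the computation found there: the rotational reduction to $\tfrac1m\int_B|\zeta-1|^{2k}\,dA(\zeta)$ and the polar evaluation give the result exactly.

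For $s\le m-2$, however, your Laplace-type argument has a real gap. The factor $|w|^{2k}$ concentrates the area integral near $|w|=1$ only at a \emph{polynomial} rate, so substituting the asymptotic profile for $\Phi_k(\cdot;\omega)$ and freezing the remaining $\rho$-dependent factors at $\rho=1$ incurs an $\mco(1/k)$ relative error, not $\mco(\eta^{2k})$. If one actually carries out your scheme, the leading term obtained for $\|\Phi_k(\cdot;\omega)\|_\omega^2$ differs from the value implicit in the stated formula by a factor that equals $1$ only when $c_k=-r^m$ exactly; by \eqref{845} this discrepancy is $\mco(k^{-1})$, not exponentially small. The side issues you flag (controlling $\{|w|\le\eta\}$ by the maximum principle, integrability of the profile singularities at $0$ and $-r^m$) are genuine but secondary: even if handled perfectly, the Laplace step itself caps the accuracy at polynomial order.

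The route in \cite{Islands} avoids area asymptotics altogether via Green's theorem. With $H(z)=z^{km+s+1}/(km+s+1)$ one has the \emph{exact} identity
\[
\|\Phi_{km+s}\|_{\mu_0}^2=\frac{1}{2i}\oint_{\partial G_{r,m}}\Phi_{km+s}(z)\,\overline{H(z)}\,dz,
\]
and after the substitution $w=(z^m-1)/r^m$ and summation over the $m$ boundary components (using the rotational symmetry you already identified) this becomes $\tfrac{r^{m(k+1)}}{2(mk+1+s)}\bigl\langle\Phi_{k+1}-c_k\Phi_k,\,(1+r^mw)^k\bigr\rangle_{\gamma_{r,m}^{(v)}}$. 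Since $(1+r^mw)^k$ has degree $k$, the $\Phi_{k+1}$ term drops out by orthogonality and one is left with the exact identity
\[
\kappa_{km+s}(\mu_0)^2=\frac{2(mk+1+s)}{-c_k\,r^{2km+m}}\cdot\frac{1}{\|\Phi_k(\cdot;\gamma_{r,m}^{(v)})\|_{\gamma_{r,m}^{(v)}}^2}.
\]
The stated asymptotic then follows from $\|\Phi_k\|_{\gamma_{r,m}^{(v)}}^2=2\pi+\mco(\eta^{2k})$, this rate coming (via $\|\Phi_{j+1}\|^2=(1-|\alpha_j|^2)\|\Phi_j\|^2$) from the exponential decay of the Verblunsky coefficients for an analytic weight on the circle.
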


Using results from the theory of orthogonal polynomials on the unit circle, we can improve this result in the following way:

\begin{prop}\label{newkapp}
If $s=0,\ldots,m-2$, then with $v=2-\frac{2}{m}-\frac{2s}{m}$, the following asymptotic formula holds as $\kri$:
\[
\kappa_{km+s}(\mu_0)^2=\left(\frac{(mk+1+s)(1+\frac{v}{2k})}{\pi r^{2km+2m}}\right)(1+\mco(k^{-2})).
\]
\end{prop}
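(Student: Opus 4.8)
The plan is to start from the exact and asymptotic formulas in Proposition \ref{kappone} and simplify the ratio $\Phi_{k+1}(-r^m;\gamma_{r,m}^{(v)})/\Phi_k(-r^m;\gamma_{r,m}^{(v)})$ that appears there. The point $-r^m$ lies on the boundary circle $\{|z|=r^m\}$ of the domain of analyticity of $D(z;\gamma_{r,m}^{(v)})$, so Proposition \ref{expconv} does not immediately apply at that point; this is why a ``theory of OPUC'' input is needed rather than just plugging in. The natural tool is the relation between the values of the monic orthogonal polynomials at a point and the Verblunsky coefficients, or more precisely the exact recursion $\Phi_{k+1}(z)=z\Phi_k(z)-\overline{\alpha_k}\Phi_k^*(z)$ together with the Szeg\H{o} asymptotics for $\Phi_k^*$ on the closed disk. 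Because $\gamma_{r,m}^{(v)}$ has an analytic, strictly positive weight, its Verblunsky coefficients $\alpha_k$ decay geometrically (at rate $r^{m}$ up to subexponential factors, since the Szeg\H{o} function is analytic in $\{|z|>r^m\}$ but not beyond), and $\Phi_k^*(z;\gamma_{r,m}^{(v)})\to D(z;\gamma_{r,m}^{(v)})^{-1} \cdot \overline{D(0)}/\ldots$ — more simply, $\Phi_k^*(z)\to \Pi(z)$ uniformly on the closed unit disk where $\Pi$ is the reciprocal of the Szeg\H{o} function, which is analytic and nonvanishing on $\overline{\bbD}$.

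Concretely, first I would record that $D(-r^m;\gamma_{r,m}^{(v)})$ as defined by the exact formula \eqref{dexact} would be $(1+r^m/(-r^m))^{v/2}=0$, which signals that $w^{-k}\Phi_k(w;\gamma_{r,m}^{(v)})$ is tending to $0$ at $w=-r^m$; so the ratio $\Phi_{k+1}(-r^m)/\Phi_k(-r^m)$ is of the form $(-r^m)\cdot(1+o(1))$ but with a correction that must be computed to order $1/k$. To get that correction, I would use the representation $\Phi_k(-r^m;\gamma_{r,m}^{(v)}) = (-r^m)^k \Phi_k^*(\tfrac{-1}{r^m};\gamma_{r,m}^{(v)})^{\overline{\ \ }}$ — i.e. the $*$-reversal $\Phi_k^*(z)=z^k\overline{\Phi_k(1/\bar z)}$ evaluated at $z=-1/r^m$ (which has modulus $1/r^m>1$, outside the disk). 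At $z=-1/r^m$, Proposition \ref{expconv} (applied at the reciprocal point, after reflecting, or by using the known exterior asymptotics for $\Phi_k^*$ which are valid on $\{|z|\ge 1\}$) gives $\Phi_k^*(-1/r^m) = D(-1/r^m;\cdot)^{-1}(1+o(1))$, and more sharply the next-order term comes from the geometric decay of $\alpha_k$. The upshot I expect is
\[
\frac{\Phi_{k+1}(-r^m;\gamma_{r,m}^{(v)})}{\Phi_k(-r^m;\gamma_{r,m}^{(v)})} = -r^m\left(1+\frac{v}{2k}+\mco(k^{-2})\right),
\]
the $v/(2k)$ arising because $D(z;\gamma_{r,m}^{(v)})=(1+r^m/z)^{v/2}$ vanishes to a fractional order, producing a $(1+O(1/k))$-type correction in the ratio of consecutive coefficients at the vanishing point, in the spirit of asymptotics of Taylor coefficients of $(1+\zeta)^{v/2}$. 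Substituting this into the second formula of Proposition \ref{kappone} replaces the factor $\Phi_k(-r^m)/(-\Phi_{k+1}(-r^m) r^{2km+m})$ by $1/(r^{2km+2m})\cdot(1+\tfrac{v}{2k})(1+\mco(k^{-2}))$, and the $(\tfrac{1}{2\pi}+\mco(\eta^{2k}))$ factor, multiplied by the leading $2$, becomes $\tfrac1\pi(1+\mco(\eta^{2k}))$, which is absorbed into $(1+\mco(k^{-2}))$ since $\eta<1$. Collecting everything yields exactly the claimed
\[
\kappa_{km+s}(\mu_0)^2=\left(\frac{(mk+1+s)(1+\frac{v}{2k})}{\pi r^{2km+2m}}\right)(1+\mco(k^{-2})).
\]

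The main obstacle is the sharp $1/k$-asymptotics of the ratio $\Phi_{k+1}(-r^m;\gamma_{r,m}^{(v)})/\Phi_k(-r^m;\gamma_{r,m}^{(v)})$ at the boundary point $-r^m$, where the Szeg\H{o} function degenerates: one must go beyond the bare $o(1)$ in Proposition \ref{expconv} and control the next term, which requires either a careful analysis of the singular factor $(1+r^m/z)^{v/2}$ and its Taylor coefficients, or a Verblunsky-coefficient argument (using that $\alpha_k(\gamma_{r,m}^{(v)})$ decays like $r^{mk}$ times a polynomial, which is classical for analytic weights) combined with the Szeg\H{o} recursion. Everything else — substituting into Proposition \ref{kappone}, simplifying the power of $r$, absorbing the exponentially small error — is bookkeeping. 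I would therefore devote the bulk of the write-up to establishing the ratio asymptotics as a preliminary computation (perhaps isolating it as a sub-lemma), and then dispatch Proposition \ref{newkapp} in a few lines.
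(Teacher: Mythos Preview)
Your overall structure is exactly the paper's: isolate the ratio $\Phi_{k+1}(-r^m;\gamma_{r,m}^{(v)})/\Phi_k(-r^m;\gamma_{r,m}^{(v)})$, expand it to order $1/k$, and substitute into Proposition~\ref{kappone}. The paper, however, does not rederive this ratio from OPUC theory at all---it simply quotes \cite[Equation~8.45]{Islands}, which already records
\[
\frac{\Phi_{k+1}(-r^m;\gamma_{r,m}^{(v)})}{\Phi_k(-r^m;\gamma_{r,m}^{(v)})}=-r^m\Bigl[1-\tfrac{v}{2k}+\mco(k^{-2})\Bigr],
\]
and the entire proof is then one line of substitution. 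So the ``main obstacle'' you identify is not an obstacle in context: the needed asymptotic is available off the shelf from the same reference that supplied Proposition~\ref{fromislands} and Proposition~\ref{kappone}.

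Two smaller points. First, your conjectured sign in the ratio is wrong: it is $1-\tfrac{v}{2k}$, not $1+\tfrac{v}{2k}$. You then make a compensating sign slip when inverting, so your final formula comes out right, but the intermediate display is incorrect; with your stated ratio one would get $(1-\tfrac{v}{2k})/(\pi r^{2km+2m})$ after inversion, not $(1+\tfrac{v}{2k})$. Second, your sketched derivation via the Szeg\H{o} recursion $\Phi_{k+1}(z)=z\Phi_k(z)-\overline{\alpha_k}\Phi_k^*(z)$ is the right idea if you really wanted to reprove \cite[Equation~8.45]{Islands}, but note that at $z=-r^m$ the term $\overline{\alpha_k}\Phi_k^*(-r^m)/\Phi_k(-r^m)$ is a ratio of two quantities that \emph{both} behave like $r^{mk}$ times a power of $k$, so extracting the precise $v/(2k)$ coefficient requires more than knowing $\alpha_k$ decays geometrically---you would need the exact leading constant and the next-order correction in $\Phi_k(-r^m)$, which is essentially circular. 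The cleaner route (and the one \cite{Islands} takes) goes through the explicit analytic structure of the weight $|1+r^m w|^{-v}$ rather than generic OPUC machinery.
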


\begin{proof}
Let $v=2-\frac{2}{m}-\frac{2s}{m}$.  From \cite[Equation 8.45]{Islands}, we see that
\begin{align}\label{845}
\frac{\Phi_{k+1}\left(-r^m;\gamma_{r,m}^{(v)}\right)}{\Phi_{k}\left(-r^m;\gamma_{r,m}^{(v)}\right)}=-r^m\left[1-\frac{v}{2k}+\mco(k^{-2})\right],\qquad\qquad k\rightarrow\infty.
\end{align}
Plugging this into the formula from Proposition \ref{kappone} gives the result
\end{proof}



Now we are ready to begin our calculation, which uses ideas from the proof of the main result in \cite{LubBerg}.

\begin{proof}[Proof of Theorem \ref{bergl} when $\mu=\mu_0$]
Recall the definition of $D$ from the statement of Theorem \ref{bergl} and let $J\subseteq\partial G_{r,m}\cap D$ be a closed arc containing a point $z_0$.  Let us fix some $Y_0\in m\bbN$ large enough so that the conclusion of Proposition \ref{fromislands} is valid for $\Phi_n(z;\mu_0)$ when $n\geq Y_0$.  Define $Y:=Y_0/m$, $z_x:=z_0+\frac{x}{n}$, $w_x=\frac{z_x^m-1}{r^m}$, $v_s=2-\frac{2}{m}-\frac{2s}{m}$, and suppose $n=Lm+M$, where $0\leq M<m$.  We have (as $\nri$)
\begin{align}
\nonumber&K_{Lm+M}\left(z_a,z_b;\mu_0\right)=\sum_{j=Y_0}^{Lm+M}\varphi_j\left(z_a;\mu_0\right)\overline{\varphi_j\left(z_b;\mu_0\right)}+\mco(1)\\
\nonumber&\qquad\qquad=\sum_{s=0}^{m-1}\,\sum_{k=Y}^{L-1}\varphi_{km+s}\left(z_a;\mu_0\right)\overline{\varphi_{km+s}\left(z_b;\mu_0\right)}+\mco(1)+\sum_{h=0}^M\varphi_{Lm+h}\left(z_a;\mu_0\right)\overline{\varphi_{Lm+h}\left(z_b;\mu_0\right)}\\
\nonumber&\qquad\qquad=\sum_{s=0}^{m-1}\,\sum_{k=Y}^{L-1}\kappa_{km+s}(\mu_0)^2\Phi_{km+s}\left(z_a;\mu_0\right)\overline{\Phi_{km+s}\left(z_b;\mu_0\right)}+\mco(1)\\
\nonumber&\qquad\qquad\qquad\qquad+\sum_{h=0}^M\kappa_{Lm+h}(\mu_0)^2\Phi_{Lm+h}\left(z_a;\mu_0\right)\overline{\Phi_{Lm+h}\left(z_b;\mu_0\right)}\\
\nonumber&\,=\sum_{s=0}^{m-2}\,\sum_{k=Y}^{L-1}\frac{(mk+1+s)(1+\frac{v_s}{2k})}{\pi(1+o(1))}\frac{D_s(w_a)\overline{D_s(w_b)}w_a^k\overline{w_b}^k(w_a+r^m(1+\frac{v_s}{2k}))(\overline{w_b}+r^m(1+\frac{v_s}{2k}))z_a^s\overline{z_b}^s}{(z_a^m-1+r^{2m})(\overline{z_b}^m-1+r^{2m})}\\
\label{ssplit}&\qquad\qquad\qquad+\sum_{k=0}^{L-1}\frac{m(k+1)}{\pi r^{m(2k+2)}}z_a^{m-1}\left(z_a^m-1\right)^k\overline{z_b^{m-1}\left(z_b^m-1\right)^k}+\mco(1)\\
\nonumber&\quad+\sum_{h=0}^M\frac{(Lm+1+h)(1+\frac{v_{h}}{L})}{\pi(1+o(1))}\frac{D_h(w_a)\overline{D_h(w_b)}w_a^L\overline{w_b}^L(w_a+r^m(1+\frac{v_h}{2L}))(\overline{w_b}+r^m(1+\frac{v_h}{2L}))z_a^h\overline{z_b}^h}{(z_a^m-1+r^{2m})(\overline{z_b}^m-1+r^{2m})},
\end{align}
where $D_s(x):=D(x;\gamma_{r,m}^{(2-\frac{2}{m}-\frac{2s}{m})})$.  The $\mco(1)$ term accounts for the fact that Proposition \ref{fromislands} only applies to sufficiently large indices.  The expression (\ref{ssplit}) has three parts: the double sum over $s$ and $k$ (call it $S_1$), the single sum over $k$ (call it $S_2$), and the sum over the index $h$ (call it $S_3$).  Let us estimate $S_3$ first.

In the sum $S_3$, the $o(1)$ term is as $\blri$.  Notice that the functions $D(\cdot;\gamma_{r,m}^{(v_s)})$ are uniformly bounded in a neighborhood of $\partial\bbD$.  Also, $w_a$ is within a distance of order $\mco(n^{-1})$ of the unit circle when $n$ is large and $z_a^m-1$ has modulus close to $r^m$ when $n$ is large and the same is true when $a$ is replaced by $b$ and these estimates can be made uniformly for $a$ and $b$ in compact subsets of $\bbC$ and $z_0\in J$.  Therefore, every term in $S_3$ is $\mco(L)$ as $L\rightarrow\infty$ and since there are at most $m$ such terms, we have
\begin{align}\label{s3}
S_3=\mco(L),\qquad\qquad\blri.
\end{align}

Now let us consider the sum $S_2$, which is given by
\begin{align}\label{middlesum}
\sum_{k=0}^{L-1}\frac{m(k+1)}{\pi r^{m(2k+2)}}z_a^{m-1}\left(z_a^m-1\right)^k\overline{z_b^{m-1}\left(z_b^m-1\right)^k}.
\end{align}
We can rewrite it as
\begin{align*}
\frac{mz_a^{m-1}\overline{z_b}^{m-1}}{\pi r^{2m}}\sum_{k=0}^{L-1}(k+1)\left[r^{-2m}\left(z_a^m-1\right)\left(\overline{z_b^m-1}\right)\right]^k=\frac{mz_a^{m-1}\overline{z_b}^{m-1}}{\pi r^{2m}}\sum_{k=0}^{L-1}(k+1)\left[w_a\overline{w}_b\right]^k.
\end{align*}
Recall that
\begin{align}\label{derform}
\sum_{k=0}^P(k+1)z^k=-(P+2)\frac{z^{P+1}}{1-z}+\frac{1-z^{P+2}}{(1-z)^2},\qquad\qquad z\neq1,
\end{align}
(see \cite{LubBerg}).  Using this formula, one can see that
\[
\left\{\frac{2}{L^2}\sum_{k=0}^{L-1}(k+1)\left(1+\frac{z}{L}\right)^k\right\}_{L\in\bbN}
\]
is a normal family, and as $\blri$ this sequence of functions converges to $H(z)$ uniformly on compact subsets of $\bbC$, where $H$ is defined as in the statement of Theorem \ref{bergl}.

If $z_0^m-1=w_0 r^m$ with $|w_0|=1$, then we can write
\begin{align*}
\left(z_a^m-1\right)\left(\overline{z_b^m-1}\right)&=r^{2m}+\frac{r^mm}{n}(\overline{w_0}az_0^{m-1}+w_0\bar{b}\bar{z}_0^{m-1})+\mco(n^{-2})\\
&=r^{2m}+\frac{r^m\overline{w_0}az_0^{m-1}+r^mw_0\bar{b}\bar{z}_0^{m-1}}{L}+\mco(L^{-2})
\end{align*}
as $\blri$.  If we define
\begin{align}\label{adef}
A:=\frac{\overline{w}_0az_0^{m-1}+w_0\bar{b}\bar{z}_0^{m-1}}{r^{m}},
\end{align}
then we can write
\[
\sum_{k=0}^{L-1}(k+1)\left[w_a\overline{w}_b\right]^k=\sum_{k=0}^{L-1}(k+1)\left(1+\frac{A(1+o(1))}{L}\right)^k=\frac{L^2}{2}H(A)+o(L^2)
\]
as $\blri$ and the convergence is uniform in $A$ in compact subsets of $\bbC$, and hence uniform in $z_0\in J$ and uniform in $a$ and $b$ in compact subsets of $\bbC$.  We conclude that
\begin{align}\label{middform}
S_2&=\frac{m|z_0|^{2(m-1)}}{\pi r^{2m}}\left[\frac{L^2}{2}H(A)+o(L^2)\right]
\end{align}
as $\blri$.

Now let us consider the sum $S_1$ in (\ref{ssplit}).  Due to the complexity of this expression, we will need to break it into pieces.  We begin by considering the sum only over the index $k$.  This allows us to temporarily ignore the factors that only depend on the index $s$ and consider
\begin{align*}
&\sum_{k=Y}^{L-1}\frac{(mk+1+s)(1+o(1))}{(z_a^m-1+r^{2m})(\overline{z_b}^m-1+r^{2m})}\left(1+\frac{v_s}{2k}\right)w_a^k\overline{w_b}^k\left(w_a+r^m\left(1+\frac{v_s}{2k}\right)\right)\left(\overline{w_b}+r^m\left(1+\frac{v_s}{2k}\right)\right),
\end{align*}
where the $o(1)$ term is as $k\rightarrow\infty$.  We can rewrite the sum as
\begin{align}
\nonumber&\frac{1}{r^{2m}}\sum_{k=Y}^{L-1}(mk+1+s)w_a^k\overline{w}_b^k\left(1+\frac{v_s}{2k}\right)\left(1+\frac{r^mv_s}{2k(w_a+r^m)}\right)\left(1+\frac{r^mv_s}{2k(\overline{w}_b+r^m)}\right)(1+o(1))\\
\label{simplify}&\qquad=\frac{m}{r^{2m}}\sum_{k=Y}^{L-1}kw_a^k\overline{w}_b^k+\sum_{k=Y}^{L-1}T(k,a,b,s,n,z_0),
\end{align}
where the term $T(k,a,b,s,n,z_0)$ is such that
\[
\lim_{\kri}\frac{|T(k,a,b,s,n,z_0)|}{k}=0
\]
and the convergence is uniform in $a$ and $b$ in compact subsets of the complex plane, uniform in $s\in\{0,1,\ldots,m-2\}$, uniform in $z_0\in J$, and uniform in $n\in\bbN$ sufficiently large.  Therefore, using the calculations in the evaluation of $S_2$, we find
\begin{align*}
\mbox{Equation }(\ref{simplify})&=\frac{m}{r^{2m}}\sum_{k=Y}^{L-1}kw_a^k\overline{w}_b^k+o(L^2)\\
&=\frac{L^2}{2}H(A)\frac{m}{r^{2m}}+o(L^2),\qquad\qquad\blri.
\end{align*}
Furthermore, the $o(L^2)$ term obeys these asymptotics uniformly in $a$ and $b$ in compact subsets of $\bbC$,  uniformly in $s\in\{0,1,\ldots,m-2\}$, and uniformly in $z_0\in J$.  With this in mind, we can use (\ref{dexact}) to write (as $\nri$)
\begin{align*}
\nonumber S_1&=\frac{L^2}{2}H(A)\frac{m}{\pi r^{2m}}\sum_{s=0}^{m-2}\left(1+\frac{r^m}{w_a}\right)^{1-\frac{1}{m}-\frac{s}{m}}\left(1+\frac{r^m}{\overline{w}_b}\right)^{1-\frac{1}{m}-\frac{s}{m}}z_a^s\bar{z}_b^s\left(1+o(1)\right)\\
&=\frac{L^2}{2}H(A)\frac{m}{\pi r^{2m}}\sum_{s=0}^{m-2}\left|1+\frac{r^m}{w_0}\right|^{2-\frac{2}{m}-\frac{2s}{m}}|z_0|^{2s}+o(L^2)
\end{align*}
as $\blri$, where the $o(L^2)$ term obeys these asymptotics uniformly in $a$ and $b$ in compact subsets of $\bbC$ and uniformly in $z_0\in J$.  If we write $z_0^m=1+r^me^{it}$, then $w_0=e^{it}$ and the above expression reduces to
\begin{align}\label{nosd}
S_1=\frac{L^2}{2}H(A)\frac{m}{\pi r^{2m}}\sum_{s=0}^{m-2}|z_0|^{2m-2}+o(L^2)=\frac{L^2}{2}H(A)\frac{m(m-1)}{\pi r^{2m}}|z_0|^{2m-2}+o(L^2)
\end{align}
as $\blri$.
Combining (\ref{s3}), (\ref{middform}), and (\ref{nosd}) gives us the following theorem.

\begin{theorem}\label{case1conc}
Uniformly for $a$ and $b$ in compact subsets of $\bbC$ and $z_0\in J$ it holds that
\begin{align*}
&K_{n}\left(z_0+\frac{a}{n},z_0+\frac{b}{n};\mu_0\right)=o(n^2)+n^2H(A)\frac{|z_0|^{2m-2}}{2\pi r^{2m}},
\end{align*}
as $\nri$, where $A$ is given by (\ref{adef}).
\end{theorem}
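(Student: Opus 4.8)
The plan is to assemble the three estimates already in hand for the pieces $S_1,S_2,S_3$ of the decomposition (\ref{ssplit}) and then convert the resulting asymptotics in $L$ into asymptotics in $n$. No new analytic input is needed at this stage; the work is pure bookkeeping, so I would keep the argument short.

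First I would collect from (\ref{s3}), (\ref{middform}), and (\ref{nosd}) that, uniformly for $a,b$ in compact subsets of $\bbC$ and $z_0\in J$,
\[
S_3=\mco(L),\qquad S_1+S_2=\frac{m(m-1)+m}{\pi r^{2m}}\,|z_0|^{2m-2}\,\frac{L^2}{2}\,H(A)+o(L^2)
\]
as $\blri$. Adding in the $\mco(1)$ term in (\ref{ssplit}), which accounts for the finitely many low-degree polynomials not covered by Proposition \ref{fromislands}, I would note that $\mco(1)$, $\mco(L)$, and the two $o(L^2)$ contributions merge into a single $o(L^2)$, while the leading terms of $S_1$ and $S_2$ add up because $m(m-1)+m=m^2$. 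Hence
\[
K_{Lm+M}\!\left(z_0+\tfrac{a}{n},z_0+\tfrac{b}{n};\mu_0\right)=\frac{m^2}{\pi r^{2m}}\,|z_0|^{2m-2}\,\frac{L^2}{2}\,H(A)+o(L^2),\qquad\blri,
\]
uniformly in the relevant parameters.

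Next I would pass from $L$ to $n$. Since $n=Lm+M$ with $0\le M<m$, we have $L=\frac{n}{m}+\mco(1)$ and therefore $L^2=\frac{n^2}{m^2}+\mco(n)$; substituting this and absorbing the $\mco(n)$ into the error yields
\[
\frac{m^2}{\pi r^{2m}}\,|z_0|^{2m-2}\,\frac{L^2}{2}\,H(A)=\frac{|z_0|^{2m-2}}{2\pi r^{2m}}\,n^2\,H(A)+o(n^2),
\]
which is the asserted formula. Uniformity in $a,b$ on compacts of $\bbC$ and in $z_0\in J$ is inherited directly from the uniform versions of (\ref{s3}), (\ref{middform}), (\ref{nosd}) already recorded, together with the boundedness $0\le M<m$ (which keeps the conversion $L^2=n^2/m^2+\mco(n)$ uniform).

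The only point that genuinely requires care — and the reason the computations preceding the statement had to be carried out precisely rather than crudely — is that the $L^2$-order terms of $S_1$ and $S_2$ must add to exactly the coefficient $m^2$. This used the sharpened leading-coefficient asymptotics of Proposition \ref{newkapp} and the explicit Szeg\H{o} function (\ref{dexact}), the latter forcing the $s$-dependent weights $|1+r^m/w_0|^{2-2/m-2s/m}|z_0|^{2s}$ appearing in $S_1$ to collapse to $|z_0|^{2m-2}$ for each $s$, so that the sum over $s=0,\dots,m-2$ contributes the factor $m-1$. Beyond verifying this bookkeeping, there is no further obstacle.
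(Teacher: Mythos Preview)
Your proposal is correct and follows exactly the paper's approach: the paper simply writes ``Combining (\ref{s3}), (\ref{middform}), and (\ref{nosd}) gives us the following theorem,'' and your write-up spells out that combination (including the $m(m-1)+m=m^2$ bookkeeping and the passage $L^2=n^2/m^2+\mco(n)$) in the intended way.
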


Notice that if $a=b=0$, then $A=0$.  This observation yields the following corollary, which also follows from \cite[Theorem 1.4]{ToTrans} (see (\ref{totconc}) below).

\begin{corollary}\label{case1corr}
Using the notation and hypotheses of Theorem \ref{case1conc}, uniformly for $z_0\in J$ it holds that
\begin{align*}
&K_{n}\left(z_0,z_0;\mu_0\right)=o(n^2)+n^2\frac{|z_0|^{2m-2}}{2\pi r^{2m}},
\end{align*}
as $\nri$.
\end{corollary}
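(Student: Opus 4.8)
The plan is to obtain Corollary~\ref{case1corr} as an immediate specialization of Theorem~\ref{case1conc}, so very little work remains. First I would invoke the conclusion of Theorem~\ref{case1conc} with the choice $a=b=0$. With this choice the perturbed arguments $z_0+\tfrac{a}{n}$ and $z_0+\tfrac{b}{n}$ both collapse to $z_0$, so the left-hand side of the asymptotic formula in Theorem~\ref{case1conc} becomes precisely $K_n(z_0,z_0;\mu_0)$.

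Next I would evaluate the parameter $A$ defined in (\ref{adef}) at $a=b=0$. Since
\[
A=\frac{\overline{w}_0az_0^{m-1}+w_0\bar b\bar z_0^{m-1}}{r^m},
\]
we get $A=0$, and therefore $H(A)=H(0)=\,_1F_1(2;3;0)=1$ by the explicit formula for $H$ recorded in the statement of Theorem~\ref{bergl}. Substituting $H(A)=1$ into the conclusion of Theorem~\ref{case1conc} yields
\[
K_n\!\left(z_0,z_0;\mu_0\right)=o(n^2)+n^2\,\frac{|z_0|^{2m-2}}{2\pi r^{2m}},\qquad\nri,
\]
which is exactly the statement of the corollary. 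The uniformity in $z_0\in J$ transfers verbatim from the uniformity assertion in Theorem~\ref{case1conc}; note that the uniformity in $a,b$ over compact subsets of $\bbC$ is not even needed here, since only the single point $a=b=0$ is used.

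I do not expect any genuine obstacle: the corollary is a one-line consequence of a theorem already established, the only nontrivial input being the evaluation $H(0)=1$. As indicated in the statement, the same asymptotic also follows directly from \cite[Theorem~1.4]{ToTrans}, independently of Theorem~\ref{case1conc}, so one could alternatively quote that result; but the specialization above is the most economical route given what has just been proved.
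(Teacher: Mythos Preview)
Your proposal is correct and matches the paper's own derivation: the corollary is obtained by setting $a=b=0$ in Theorem~\ref{case1conc}, observing that $A=0$ and $H(0)=1$, with uniformity in $z_0\in J$ inherited directly. The paper likewise notes that the same asymptotic is available via \cite[Theorem~1.4]{ToTrans}.
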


Theorem \ref{case1conc} and Corollary \ref{case1corr} together prove Theorem \ref{bergl} in the case $\mu=\mu_0$.
\end{proof}

We note that for a fixed $z_0$ as in the statement of Theorem \ref{bergl}, there is an alternative proof of the uniformity of convergence in (\ref{kform3}) for $a$ and $b$ in compact subsets of $\bbC$ when $\mu=\mu_0$.  Indeed, let $G_0$ be the connected component of $\bar{G}$ that contains $z_0$ and let $\mu_{0,0}$ denote the restriction of $\mu_0$ to $G_0$.  Note that \cite[Theorem 1.1]{LubBerg} applies to the measure $\mu_{0,0}$ and the point $z_0$.  Let $Q_n$ be the extremal polynomial for the infimum defining $\lambda_n(z_0+a/n;\mu_0)$.  We calculate
\begin{align*}
\lambda_n(z_0+a/n;\mu_0)&=\int_G|Q_n(w)|^2d\mu_0(w)\geq\int_{G_0}|Q_n(w)|^2d\mu_{0,0}(w)\geq\lambda_n(z_0+a/n;\mu_{0,0})
\end{align*}
Therefore,
\begin{align*}
\frac{|K_n(z_0+a/n,z_0+b/n;\mu_0)|}{n^2}&\leq\frac{\sqrt{K_n(z_0+a/n,z_0+a/n;\mu_0)}}{n}\frac{\sqrt{K_n(z_0+b/n,z_0+b/n;\mu_0)}}{n}\\
&\leq\frac{\sqrt{K_n(z_0+a/n,z_0+a/n;\mu_{0,0})}}{n}\frac{\sqrt{K_n(z_0+b/n,z_0+b/n;\mu_{0,0})}}{n}
\end{align*}
and \cite[Theorem 1.1]{LubBerg} and \cite[Theorem 1.4]{ToTrans} show that this last quantity is uniformly bounded in $a$ and $b$ in compact subsets of $\bbC$.  Therefore,
\[
\left\{\frac{K_n(z_0+a/n,z_0+\bar{b}/n;\mu_0)}{K_n(z_0,z_0;\mu_0)}\right\}_{n\in\bbN}
\]
is a normal family in the complex variables $a$ and $b$, so convergence to a limiting function is uniform on compact subsets of $\bbC^2$.

\subsection{The General Case}\label{gencase}

The complete proof of Theorem \ref{bergl} now follows from the calculations in Section \ref{first} exactly as in Sections 3 and 4 of \cite{LubBerg}.  Indeed, many of the results of \cite[Sections 3 $\&$ 4]{LubBerg} do not depend on the simple connectivity of the support of the measure, only on the fact that $z_0$ is a peak polynomial point (as defined in \cite{LubBerg}) and a model case for comparison.  Rather than duplicate the proof in \cite{LubBerg}, we provide here only a sketch of the necessary arguments.

The first step in proving the general case is to establish estimates on the Christoffel functions, specifically on expressions of the form
\[
\lim_{\nri}n^2\lambda_n(u_n;\mu),
\]
where $u_n\in\partial G_{r,m}$ and $|u_n-u|=\mco(n^{-1})$ as $\nri$ for some $u\in\partial G_{r,m}\cap D$.  Fortunately, such a result was already proven in \cite[Theorem 1.4]{ToTrans} and tells us that as long as $u\in\partial G_{r,m}\cap D$, we have
\begin{align}\label{totconc}
\lim_{\nri}n^2\lambda_n(u_n;\mu)=2\pi \mu'(u)\left(\frac{|u|^{m-1}}{r^m}\right)^{-2},
\end{align}
where we used \cite[Equation 2.2]{ToTrans}.  The result in \cite{ToTrans} also tells us that if $M>0$ is fixed, then the conclusion (\ref{totconc}) holds uniformly for $u$ in compact subsets of $\partial G_{r,m}\cap D$ and sequences $\{u_n\}_{n\in\bbN}$ such that $|u_n-u|\leq M/n$ (because of our continuity assumption on $\mu'$).  This result will serve as an analog of \cite[Theorem 3.1]{LubBerg} for our proof.

The remainder of the proof proceeds exactly as in \cite[Section 4]{LubBerg}.  Notice that \cite[Lemma 4.1]{LubBerg} does not depend on the geometry of $\supp(\mu)$ nor does it depend on the absolute continuity of the measure; it is simply a monotonicity result that applies to the setting of Theorem \ref{bergl}.  The proof of \cite[Lemma 4.2]{LubBerg} depends only on \cite[Theorem 3.1]{LubBerg}, of which we have an appropriate analog in (\ref{totconc}).  Therefore, we may obtain the same conclusion.  The proof of \cite[Lemma 4.3]{LubBerg} also immediately carries over to our setting if we use $K_n(z,t;\mu_0)$ in place of $\tilde{K}^C_n(z,t)$.  The statement and proof of \cite[Lemma 4.4]{LubBerg} remain valid in our setting as well.  The proof of \cite[Theorem 1.1]{LubBerg} can then be copied almost verbatim to complete the proof of Theorem \ref{bergl}.

\vspace{2mm}

\noindent\textbf{Acknowledgements.}  We would like to thank the anonymous referees for several useful comments that improved the exposition of this work and especially for bringing the references \cite{Bourgade,BNR} to our attention.



\end{document}